\def\C{\mathbb{C}}
\def\c2{\mathbb{C}^2}
\def\Q{\mathbb{Q}}
\def\Z{\mathbb{Z}}
\def\N{\mathbb{N}}
\def\P{\mathbb{P}}
\def\1{\bold{1}}
\def\l{\lambda}
\def\om{\omega}
\def\lbr{\lbrace}
\def\rbr{\rbrace}
\def\J{\mathcal J}
\def\P{\mathcal P}
          \newcommand\PP{{\mathbb{P}}}
           \newcommand\G{{\mathcal G}}
\newtheorem{lem}{Lemma}[section]
\newtheorem{pro}[lem]{Proposition}
\newtheorem{def/not}[lem]{Definition/Notations}
\newtheorem{prob}[lem]{Problem}
\newtheorem{thm}[lem]{Theorem}
\newtheorem{cor}[lem]{Corollary}
\newtheorem{rqe}[lem]{Remark}
\newtheorem{obs}[lem]{Observation}
\begin{document}

\title{Remarks on Mukai threefolds admitting $\mathbb C^{*}$ action}

\author{S\l awomir Dinew, Grzegorz Kapustka, Micha\l \ Kapustka}

\subjclass[2000]{Primary: 32Q20. Secondary: 32U15, 32G05.}

\address{Department of Mathematics and Computer Science, Jagiellonian
University, Krak\'ow, Poland \newline
{\tt slawomir.dinew@im.uj.edu.pl,}\newline
Institute of Mathematics of the Polish Academy of Sciences,
ul. \'{S}niadeckich 8, P.O. Box 21, 00-956 Warszawa, Poland.\newline 
{\tt grzegorz.kapustka@uj.edu.pl,}\newline
University of Stavanger, Norway \newline
 {\tt michal.kapustka@uj.edu.pl}}

\maketitle
\begin{abstract}
We investigate geometric invariants of the one parameter family of Mukai threefolds that admit $\mathbb C^{*}$ action. In particular we find the invariant divisors in the anticanonical system, and thus establish a bound on the log canonical thresholds. Furthermore we find an explicit description of such threefolds in terms of the quartic associated to the variety-of-sum-of-powers construction. This yields that any such threefold admits an additonal symmetry which anticommutes with the $\mathbb C^{*}$ action, a fact that was previously observed near the Mukai-Umemura threefold in \cite{RST}. As a consequence the K\"ahler-Einstein manifolds in the class form an open subset in the standard topology.
\end{abstract}

\section*{Introduction} The family $V_{12}$ of compact complex threefolds with
genus 12 found by Iskovskih \cite{Is1,Is2}, (widely known as Mukai threefolds)
is the source of many interesting examples in complex geometry. In fact this
class of threefolds was missed by Fano in his classification list (see
\cite{Is1, Is2}). We refer to \cite{M} for their basic properties.

The interest of complex differential geometry towards Mukai threefolds stems
from the fact that a subclass of these appears in the famous example of Tian
(\cite{Ti})  of a compact Fano manifold with no nonzero holomorphic vector
fields yet with no K\"ahler-Einstein metric. In fact before
Tian's paper a folklore conjecture expected holomorphic vector fields to be the
only obstruction of existence of such metrics.  This example partially motivated
 a suitable notion of stability ($K$-stability defined by Tian in
\cite{Ti}) which turned out to be  the right algebro-geometric condition equivalent to the existence of K\"ahler Einstein metrics. We refer to \cite{Ti,Ti2} for the history of the
problem,  and to \cite{CDS1,CDS2,CDS3,Ti4} for its final solution. In a sense the
family of Mukai threefolds provides an excellent test polygon for such
investigations.

Below we list what is known on the K\"ahler-Einstein problem with regard to the
family $V_{12}$ so far. First of all of crucial importance is the (identity component of the) automorphism
group of such threefolds. This was an object of intensive research (see \cite{P}
and references therein). It turns out that a generic Mukai threefold has a
discrete group of automorphisms (thus the identity component is just a singleton). The remaining cases were classified by
Prokhorov (\cite{P}). If $G$ is the identity component in $Aut(X)$ then all the possibilities are  as follows:
\begin{thm}\label{P} The identity component of the automorphism group $G$ of $V_{12}$ is a singleton except in
the following cases:
\begin{itemize}
\item[1)] $ X=V_{MU}$ is the Mukai-Umemura manifold, then $G = SL(2)$;
\item[2)] $X$ is a member of the family $ V^a$, then $G=\mathbb{C}^{\ast} $;
\item[3)] $X= V^m$ then $G = \mathbb{C}^{+}$.
\end{itemize}
\end{thm}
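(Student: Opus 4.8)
\emph{Sketch of the approach.} Since $X=V_{12}$ is Fano, $\mathrm{Aut}(X)$ is a linear algebraic group, its identity component $G$ has Lie algebra $\mathfrak g=H^{0}(X,T_{X})$, and $G$ acts equivariantly on the anticanonical embedding $X\subset\PP^{13}$ (recall $-K_X^{3}=22$ and $h^{0}(-K_X)=14$), so that $G$ is realized as a closed connected subgroup of $PGL_{14}$ stabilizing $X$. The claim is thus equivalent to determining, for each $X$ in the family, the connected group integrating $H^{0}(X,T_X)$, which vanishes for a generic member. The plan is to bound the size and structure of $G$ from the geometry of $X$, to exhibit each exceptional group on a distinguished member, and then to prove the list is complete.

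First I would handle the reductive, positive-dimensional case. If $\dim G=3$ a generic orbit is open, so $X$ is an equivariant compactification of $G/\Gamma$ with $\Gamma$ finite; taking $G$ to be a rank-one semisimple group ($SL(2)$ or $PGL(2)$) and analyzing the induced $SL(2)$-action on the $14$-dimensional representation $H^{0}(-K_X)$ via its irreducible summands pins the almost-homogeneous genus-$12$ threefold down uniquely to the Mukai--Umemura threefold $V_{MU}=\overline{SL(2)\cdot[f]}$, with $f$ the degree-$12$ icosahedral binary form and $\Gamma\cong A_5$. I would then exclude $\dim G\ge 4$, since a generic stabilizer would then be positive-dimensional and force $X$ into the short list of Fano threefolds of larger symmetry (such as $\PP^{3}$ or the quadric), which contains no genus-$12$ member; and I would exclude tori of rank $\ge 2$, since a faithful $(\C^{*})^{2}$-action degenerates Mukai's defining data too far to leave a smooth $V_{12}$.

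For the one-dimensional case, $\mathfrak g=\langle\xi\rangle$ and $G$ is determined by the Jordan type of $\xi$ on $H^{0}(-K_X)$: a semisimple $\xi$ integrates to $G=\C^{*}$ and a nilpotent $\xi$ to $G=\C^{+}$. To produce and constrain these I would use Mukai's variety-of-sums-of-powers model, under which an automorphism of $X$ descends to a projective automorphism of the associated plane quartic $F$; a connected one-parameter symmetry of $X$ then corresponds to a one-parameter subgroup of $PGL_{3}$ preserving $F$, and classifying quartics invariant under a torus, respectively under a unipotent subgroup, produces the one-parameter family $V^{a}$ and its unique unipotent degeneration $V^{m}$. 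This matches the local picture near $V_{MU}$ used in \cite{RST}: the Kuranishi space $H^{1}(V_{MU},T)$ is an $SL(2)$-representation, and the residual symmetry of a small deformation in a direction $t$ is the stabilizer of $[t]$ in $SL(2)$, finite for generic $t$, a torus on a distinguished stratum, and unipotent on its degeneration.

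The hard part will be exhaustiveness and sharpness: ruling out any member with a group strictly larger than those listed, excluding higher-rank tori and unipotent or non-abelian solvable groups of dimension $\ge 2$, and verifying that the two one-dimensional strata really integrate to $\C^{*}$ and $\C^{+}$ with no accidental automorphism collapsing them to $SL(2)$. This demands the fine geometry of $V_{12}$ --- the finiteness and $\mathrm{Aut}$-invariance of its distinguished lines and conics and of the invariant anticanonical divisors, together with the rigidity of the VSP model --- in order to bound $H^{0}(X,T_X)$ from above uniformly across the family and to separate the torus stratum from the unipotent one.
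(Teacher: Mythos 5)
First, a point of order: the paper does not prove this statement at all --- it is Prokhorov's classification, quoted with the citation \cite{P}, and it is used later as an \emph{input} rather than established (for instance, in Section 3 the nontriviality of the induced $\C^{*}$-action on $VSP(\Gamma_t,6)$ is checked only ``for generic $\lambda$'' precisely by appealing to Theorem \ref{P}). So there is no proof in the paper to compare yours against; your proposal has to be judged as a self-contained proof of Prokhorov's theorem, and as such it is a strategy outline, not a proof.

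The genuine gap is the one you yourself flag at the end: exhaustiveness, which is the entire content of the theorem. Every exclusion step is asserted rather than argued. (i) The case $\dim G=2$ (the torus $(\C^{*})^{2}$ and the solvable groups $(\C^{+})^{2}$, $\C^{+}\rtimes\C^{*}$) is never treated; your dimension-three discussion silently assumes $G$ is rank-one semisimple, and since every connected group of dimension at least two contains a two-dimensional connected subgroup, all higher cases hinge on this missing one. The sentence ``a faithful $(\C^{*})^{2}$-action degenerates Mukai's defining data too far'' is a hope, not an argument. (ii) Your reduction of the one-dimensional case to plane quartics needs functoriality in the direction $\mathrm{Aut}^{0}(X)\to\mathrm{Aut}(\PP^{2},C)$, i.e.\ that every automorphism of $X$ induces a \emph{linear} automorphism of Mukai's quartic with finite kernel. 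The paper only ever uses the easy converse direction (automorphisms of $C$ induce automorphisms of $VSP(C,6)$); the direction you need requires the canonicity of the Hilbert-scheme-of-lines quartic together with Mukai's uniqueness theorem, and must be proved, not assumed --- otherwise the classification of invariant quartics classifies nothing. By contrast, Prokhorov's actual argument rests on the birational link to $V_5$ recalled in Section 2.2 of the paper (blow-up of a rational normal quintic, flop, contraction): since the link can be taken equivariantly, $\mathrm{Aut}^{0}(V_{12})$ is identified with a connected subgroup of $\mathrm{Aut}(V_5)\cong PGL(2,\C)$ preserving the relevant curve, and completeness --- including the exclusion of two-dimensional groups and the separation of the $\C^{*}$, $\C^{+}$ and $SL(2)$ strata --- then follows from the very restricted subgroup structure of $PGL(2,\C)$. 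That concrete mechanism for bounding $G$ from above is exactly what your sketch lacks.
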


The existence of K\"ahler-Einstein metrics was investigated in detail {\it near} the Mukai-Umemura manifold (see \cite{Ti, D1, D2, RST}). In particular the Mukai-Umemura threefold itself admits such a metric (\cite{D2}) whereas some small deformations of it do not ({\cite{Ti}).

On the other hand it is also interesting to consider the problem {\it globally} i.e. in the whole family rather than near a single member. It should be emphasized that the deformation theory near the Mukai-Umemura example is considerably different than around a generic element of the family. One of the reasons is exactly beacuse this example is the unique one having a maximal symmetry group.
Below we briefly recall what is known in the global setting.

The case of discrete group of automorphisms is reasonably understood. Indeed, coupling the results of Tian (\cite{Ti}) and Donaldson (\cite{D1}) with the theorem of Odaka \cite{O} (see also the recent paper \cite{D3}) we
get the following observation:
\begin{obs}
 The subset of $M$ parametrizing Mukai threefolds with discrete group of
automorphisms that admit  K\"ahler-Einstein metrics is Zariski open and
nonempty. However, there are special Mukai threefolds (for example class 4 and 5 in Donaldson classification, see \cite{D1}) with discrete automorphism
group that do not admit K\"ahler-Einstein metrics. 
\end{obs}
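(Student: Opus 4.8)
The plan is to assemble the statement from the three results cited in the paragraph above: the Yau--Tian--Donaldson correspondence, Odaka's openness theorem, and Tian's generic existence result. First I would isolate the relevant open stratum. The function $X \mapsto \dim \mathrm{Aut}(X) = h^0(X, T_X)$ is upper semicontinuous in the flat family, so the locus $U_1 \subset M$ of threefolds with finite (equivalently discrete) automorphism group is Zariski open; by Theorem~\ref{P} its complement is precisely the union of the loci $V_{MU}$, $V^a$ and $V^m$, hence a proper closed subset, so $U_1$ is moreover nonempty and dense. Over $U_1$ the automorphism group is finite, and therefore K-polystability and K-stability coincide there.

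The second step is to combine the correspondence with the openness. By the solution of the Yau--Tian--Donaldson conjecture (\cite{CDS1,CDS2,CDS3,Ti4}) a smooth Fano threefold admits a K\"ahler--Einstein metric if and only if it is K-polystable; coupled with the previous step this says that over $U_1$ the K\"ahler--Einstein locus equals the K-stable locus, call it $U_2$. The essential input is then Odaka's theorem (\cite{O}, see also \cite{D3}): K-stability is an algebraic condition that cuts out a \emph{Zariski} open subset $U_2$ in a flat family of Fano varieties. Intersecting, the set we are after is $U_1 \cap U_2$, which is thus Zariski open. For nonemptiness I would invoke Tian's theorem (\cite{Ti}) that the generic member of the family is K\"ahler--Einstein: such a member lies in $U_1$ and in $U_2$, so $U_1 \cap U_2 \neq \emptyset$.

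The final, negative, assertion is a direct appeal to Donaldson's analysis (\cite{D1}): the threefolds he labels class~4 and class~5 admit an explicit test configuration with negative Donaldson--Futaki invariant, hence are K-unstable, and so by the correspondence admit no K\"ahler--Einstein metric even though their automorphism groups are finite. This requires nothing beyond transcribing his destabilizing configurations into the present language.

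The hard part is not any single computation but the bookkeeping that aligns the hypotheses of these theorems. In particular one must check that the openness invoked from \cite{O} is genuinely for the Zariski topology --- this is exactly what distinguishes the present statement from the weaker euclidean openness produced by analytic continuation of the Monge--Amp\`ere equation --- and that it applies to this concrete smooth family rather than only to an abstract moduli functor. A secondary point to verify is that the reduction of K-polystability to K-stability holds on all of $U_1$, which is precisely where the finiteness of $\mathrm{Aut}(X)$ is used in an essential way.
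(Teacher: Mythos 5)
Your assembly is essentially the one the paper intends --- semicontinuity of $h^0(X,T_X)$ for the discrete-automorphism locus, Odaka's theorem \cite{O} together with the Yau--Tian--Donaldson correspondence for Zariski openness of the K\"ahler--Einstein locus inside it, and \cite{D1} for the negative examples --- but your nonemptiness step rests on a theorem that does not exist. Tian's paper \cite{Ti} contains no statement that the generic Mukai threefold admits a K\"ahler--Einstein metric; what it proves about this family is the opposite-flavored fact that certain deformations of the Mukai--Umemura threefold with \emph{discrete} automorphism group admit no such metric (existence for $V_{MU}$ itself, which in any case lies outside your $U_1$ because its automorphism group is $SL(2)$, was only completed in \cite{D2}). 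Generic existence is exactly what the Observation asserts --- Zariski open and nonempty implies generic --- so invoking it as an input is circular. The result that actually supplies nonemptiness is Donaldson's deformation analysis in \cite{D1}: small deformations of $V_{MU}$ whose parameter in $H^1(V_{MU},T_{V_{MU}})$ lies in a \emph{stable} $SL(2)$-orbit (class 2 of his classification) have finite automorphism group and do admit K\"ahler--Einstein metrics. Note that you cannot reach $U_1$ from $V_{MU}$ by a soft openness argument either: the K\"ahler--Einstein condition is not Euclidean-open at $V_{MU}$ --- that is precisely Tian's counterexample --- so the class-2 existence statement is an essential, not a cosmetic, ingredient.

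A secondary inaccuracy concerns your treatment of classes 4 and 5. They are not ruled out by test configurations with negative Donaldson--Futaki invariant, and no such configuration appears in \cite{Ti} or \cite{D1}; the arguments there are analytic (a hypothetical K\"ahler--Einstein metric on $X_t$ would force the $SL(2)$-orbit of $t$ to be closed, which fails for these classes) and predate the YTD theorem. In algebraic terms, the natural destabilizing objects are the degenerations along the non-closed orbits: class 5 degenerates to $V_{MU}$ itself and class 4 to a member of $V^a$, so the central fibers are K\"ahler--Einstein and the associated non-product special test configurations have \emph{vanishing} Futaki invariant. What fails is K-polystability --- equivalently K-stability, the automorphism groups being finite --- not K-semistability; these manifolds are in fact strictly K-semistable. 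Your conclusion is correct, but the mechanism you describe is not the one the cited sources provide.
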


In this paper we focus on the remaining  cases i.e. when the automorphism group is
infinite.

First, as $\mathbb C^{+}$ is not a reductive group the manifold $V^m$  does not
admit a K\"ahler-Einstein metric by  the classical Matsushima theorem \cite{Ma}.

Secondly, we already mentioned above that the Mukai-Umemura manifold admits a K\"ahler-Einstein metric.

Thus the only remaining case is the family of manifolds $V^a$ with the identity component of the group of
automorphisms isomorphic to $\mathbb{C}^{\ast}$. Small deformations of the Mukai-Umemura example living in this class were investigated by Donaldson
\cite{D1} and Rollin-Simanca-Tipler \cite{RST} (they correspond to the class 3 in the list in \cite{D1}), and it turns out that {\it some} deformations of the Mukai-Umemura
example living in this class do possess a K\"ahler-Einstein metric. A natural question, raised by
Donaldson in \cite{D1}, is to classify the K\"ahler-Einstein examples in the
family.

Our goal in the present note is to study the geometry of the generic element in the family $V^a$.
We shall present three classical constructions of this manifold: as variety of sums of powers (see \cite{M1}), by birational transformations of the Fano threefold $V_5$ of degere $5$ 
(this was the original construction of Prokhorov), finally as a subset of the the Grassmannian $\G(4,7)$. We describe the relations between these constructions. In particular in Section \ref{sec4} a Macaulay 2 program is presented. This program describes the quartic being the Hilbert scheme of lines on a Fano manifold in $V_{12}$ 
constructed as a subset of $\G(4,7)$. The Hilbert scheme of lines is the covariant quartic of another quartic that gives rise through the variety of sums of powers construction the Fano threefold in $V_{12}$ back. 

First of all we discuss the "standard" approach to the problem of existence of K\"ahler-Einstein metrics by analyzing the log-canonical thresholds (equivalently: alpha invariants) and exploiting the symmetries such manifolds have. Our result, as expected, shows that the natural symmetries are not enough to solve affirmatively the K\"ahler-Einstein problem.
\begin{thm} For any element of the family $V^a$ one has
$$ lct(V^a,\C^{*})\leq \frac12.$$
\end{thm}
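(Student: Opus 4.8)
The plan is to produce a single $\C^{*}$-invariant effective anticanonical divisor on a general $V^a$ that is \emph{non-reduced}, and then to read the bound off directly from its multiple component, so that essentially no singularity analysis is required. Here I read $lct(V^a,\C^{*})$ as the equivariant global log canonical threshold, namely the infimum of $lct(X,D)$ over all $\C^{*}$-invariant effective anticanonical $\Q$-divisors $D$. The starting observation is that every $\C^{*}$-invariant member of $|-K_X|$ is the zero locus of a weight eigenvector of the induced action on $H^0(X,-K_X)$, and conversely every eigenvector produces such a divisor (scalar multiples have the same zero locus). Thus the invariant part of the anticanonical system is the union $\bigcup_{w}\PP\bigl(H^0(X,-K_X)_w\bigr)$ of the projectivized weight spaces. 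Accordingly I would first make the action explicit using one of the three models recalled above — most conveniently the embedding into $\G(4,7)$ or the birational description through $V_5$ — write down the acting one-parameter subgroup, decompose $H^0(X,-K_X)$ into weight spaces, and locate the fixed locus.

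The heart of the argument is the second step: exhibiting a weight eigenvector $s\in H^0(X,-K_X)$ whose divisor has a component of multiplicity at least two, say $\mathrm{div}(s)=2D_0+D'$ with $D_0$ a prime surface and $D'$ effective. Geometrically I expect $D_0$ to be swept out by the invariant curves limiting to the fixed locus, so that the distinguished weight space consists precisely of sections vanishing to order two along $D_0$; this is the content of the abstract's claim that the invariant anticanonical divisors can be written down explicitly. Granting such a $D=\mathrm{div}(s)$, the bound is immediate. At the generic point of the double component $D_0$ the pair $(X,cD)$ carries coefficient $2c$ along $D_0$, so log canonicity already forces $2c\le 1$, whence $lct(X,D)\le\tfrac12$. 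Since $lct(V^a,\C^{*})$ is by definition an infimum over all such invariant divisors, producing this one divisor yields $lct(V^a,\C^{*})\le\tfrac12$, uniformly in the family.

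The main obstacle is, unsurprisingly, the construction in the second step. One must verify that some weight space of $H^0(X,-K_X)$ genuinely consists of sections with a squared factor, and — crucially — that this persists for \emph{every} member of $V^a$ rather than only at the special Mukai–Umemura point, where the larger symmetry group could create or destroy such coincidences. Concretely this means identifying the candidate divisor $D_0$, checking that the relevant section vanishes to order two along it, and confirming that no cancellation occurs as the parameter $a$ varies; this is exactly the sort of explicit verification for which the Macaulay 2 computation announced later in the paper is designed. I note finally that the non-reduced structure is the natural thing to hunt for precisely because it is so economical: were every invariant divisor reduced, one would be forced back onto a multiplicity estimate $lct_p(X,D)\le 3/\mathrm{mult}_p D$ at a fixed point $p$, which would require multiplicity at least six and hence a far more delicate local computation, whereas a single multiple component pins the threshold at or below $1/2$ with no further work.
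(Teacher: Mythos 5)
Your reduction ``non-reduced invariant divisor $\Rightarrow lct\le\frac12$'' is logically fine, but the divisor you plan to hunt for cannot exist, so the approach collapses at exactly the step you flag as the main obstacle. Every member of $V_{12}$ (in particular every $V\in V^{a}$) is a \emph{prime} Fano threefold: $\Pic(V)=\Z\cdot(-K_V)$, i.e.\ the anticanonical class is the ample generator. Hence any prime divisor is linearly equivalent to $a(-K_V)$ with $a\ge 1$, and if $D=\sum_i m_iD_i\in|-K_V|$ with $D_i$ prime, $D_i\sim a_i(-K_V)$, then $\sum_i m_ia_i=1$ forces a single component with $m_1=a_1=1$. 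So \emph{every} effective anticanonical divisor on $V$, invariant or not, is irreducible and reduced; in particular no weight eigenvector $s\in H^0(V,-K_V)$ can satisfy $\mathrm{div}(s)=2D_0+D'$. (Your parenthetical worry about ``cancellation as the parameter varies'' is beside the point: the obstruction is the Picard lattice, not a coincidence at special parameters.) This is not a fixable gap but a dead end: the bound must come from the singularities of reduced invariant divisors.

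That is what the paper does, and it also avoids the crude estimate $lct_p(X,D)\le 3/\mult_pD$ that you correctly note would require multiplicity $\ge 6$. The paper first determines the invariant part of $|-K_V|$ (as you predict, it is the union of projectivized weight spaces: twelve isolated divisors and a line of them), then computes explicit affine equations for these divisors by elimination in Macaulay~2, and observes that each equation is \emph{weighted} homogeneous. Koll\'ar's inequality (Theorem \ref{Kollar} in the paper) then gives $lct(D)\le\bigl(\sum_iw(x_i)\bigr)/w(f)$. For the divisors cut out by $p_{0123}=0$ and by $p_{3456}=0$, the equations are weighted homogeneous of degree $12$ for weights $(3,2,1)$, so
\begin{equation*}
lct(D)\le\frac{3+2+1}{12}=\frac12 ,
\end{equation*}
which proves the theorem. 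Note that the ordinary multiplicity of these divisors at the relevant fixed point is only $4$ (the equation contains the monomial $p_{1356}^4$ with nonzero coefficient for generic $d$), so the naive bound would only yield $3/4$; the anisotropic weights are essential. So the ``far more delicate local computation'' you hoped to avoid is in fact unavoidable, though Koll\'ar's weighted-homogeneous criterion makes it a one-line check once the equations are in hand.
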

More importantly we were able to construct all the $\C^{*}$-invariant divisors in the linear system $|-K_{V}|$ for any $V\in V^{a}$. 

The construction by variety of sum of powers, due to Mukai (\cite{M}) was used to characterize the Mukai-Umemura threefold. It turns our that the associated plane quartic in this case is just a double conic. Our next result is the following characterization of Mukai threefolds admitting $\C^{*}$ action, which can be seen as a generalization of Mukai theorem:
\begin{pro}
Let $V$ be a member of the family $V^{a}$ of Mukai threefolds admitting (nontrivial) $\C^{*}$ action. Then the associated plane quartic in the variety of sum of powers construction is formed by two tangent conics. In the case of Mukai-Umemura threefold these conics coincide (i.e. we get a double conic), while for $V\in V^{a}$ the conics are tangent to each other at two points.
\end{pro}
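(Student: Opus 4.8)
The plan is to push the $\C^{*}$-action forward from $V$ onto the associated plane quartic and then to classify the invariant quartics by a direct weight computation. The construction $V=\mathrm{VSP}(F,6)$ attaching to $V$ its plane quartic $F\subset\mathbb{P}^{2}$ is $PGL_{3}$-equivariant: a projective transformation of $\mathbb{P}^{2}$ fixing the curve $\{F=0\}$ induces an automorphism of $V$, and this correspondence identifies the connected components $\operatorname{Aut}^{0}(V)$ and $\operatorname{Stab}^{0}_{PGL_{3}}(F)$. Hence, for $V\in V^{a}$, the assumption $\operatorname{Aut}^{0}(V)=\C^{*}$ from Theorem \ref{P} forces $F$ to be a semi-invariant of a one-parameter torus $T\subset PGL_{3}$.

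I would then fix the torus weights using the family structure. Since $V^{a}$ degenerates to the Mukai--Umemura threefold $V_{MU}$, whose automorphism group is $SL(2)$ acting on $\mathbb{P}^{2}=\mathbb{P}(\Sym^{2}W)$ for $W$ the standard representation, the torus $T$ must be (conjugate to) the maximal torus of this $SL(2)$. In coordinates $(x,y,z)$ associated to the basis $s^{2},st,t^{2}$ of $\Sym^{2}W$, this torus acts with weights $(2,0,-2)$, so that a weight-zero $T$-semi-invariant quartic is a linear combination of exactly the three monomials $y^{4}$, $xy^{2}z$ and $x^{2}z^{2}$; that is,
$$F=a\,y^{4}+b\,xy^{2}z+c\,x^{2}z^{2}.$$

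The tangency is now an elementary consequence of this normal form. Setting $u=xz$ and $v=y^{2}$ turns $F$ into the binary quadratic $av^{2}+buv+cu^{2}$, which over $\C$ factors as a product of two forms $\mu_{i}v+\nu_{i}u=\mu_{i}y^{2}+\nu_{i}xz$. Each factor cuts out a conic in the pencil spanned by the double line $\{y^{2}=0\}$ and the line-pair $\{xz=0\}$; the two base points $[1:0:0]$ and $[0:0:1]$ of this pencil are points at which every smooth member is tangent to the line $\{y=0\}$. Two distinct members therefore meet only at these points, with intersection multiplicity two at each, so $\{F=0\}$ is a pair of conics tangent at two points. The two factors coincide exactly when the discriminant $b^{2}-4ac$ vanishes, and the resulting double conic is proportional to $(y^{2}-4xz)^{2}$, the square of the $SL(2)$-invariant discriminant (Veronese) conic $\{y^{2}-4xz=0\}$; this is the Mukai--Umemura case. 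For the remaining members of $V^{a}$ one has $b^{2}\neq 4ac$, and the two conics are genuinely distinct.

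The step demanding the most care is the first one: making the equivariance of the VSP construction precise enough to guarantee that a $\C^{*}\subset\operatorname{Aut}^{0}(V)$ really descends to a torus on the quartic's $\mathbb{P}^{2}$, and that this torus carries the weights $(2,0,-2)$ rather than some other pattern, which would a priori produce a different invariant locus. I would settle this through the degeneration $V^{a}\rightsquigarrow V_{MU}$, which realizes $T$ as a subtorus of the limiting $SL(2)$ and thereby pins the weights up to scaling. The remaining verifications---that the generic members of the pencil are smooth conics, the degenerate members $\{y^{2}=0\}$ and $\{xz=0\}$ being excluded by $a,c\neq 0$---are routine.
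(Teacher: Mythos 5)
Your proposal runs in the opposite direction to the paper's proof, and its first step hides the whole difficulty. The paper starts from the pencil of quartics $\Gamma_t$ given by pairs of tangent conics, checks that the projectivities preserving $\Gamma_t$ induce a \emph{nontrivial} $\C^*$-action on $VSP(\Gamma_t,6)$ (even this nontriviality requires an argument, which the paper supplies), and then identifies this one-parameter family with $V^a$ by combining Prokhorov's description of $V^a$ as a one-parameter family, Donaldson's parametrization, and the fact that the Hilbert scheme of lines is the covariant quartic of the VSP quartic together with Lemma \ref{lem-conic}. You instead assert at the outset that the VSP correspondence identifies $\mathrm{Aut}^0(V)$ with $\operatorname{Stab}^0_{PGL_3}(F)$, i.e.\ that the $\C^*$-action on $V$ descends to a nontrivial linear action on the plane of the quartic. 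That is the hard direction: what Mukai's theorem (as used in the paper) gives is only that $F$ is unique up to isomorphism and that a projectivity preserving $F$ induces an automorphism of $VSP(F,6)$. The converse --- that the identity component of $\mathrm{Aut}(V)$ is induced from $PGL_3$ --- does not follow formally from uniqueness of $F$; recovering $(\PP^2,F)$ functorially from $V$ goes through the Hilbert scheme of lines and the covariant (Scorza) quartic, and making this precise is a genuinely nontrivial piece of work that the paper's architecture is designed to avoid. As written, this is a real gap, not a routine verification.

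Two further steps also need repair, even granting the descent. First, pinning the torus weights by degeneration to the Mukai--Umemura threefold is not rigorous: stabilizers jump in the limit (the double conic has stabilizer $PGL_2$, not $\C^*$), and the conjugacy class of $\operatorname{Stab}^0(F_t)$ in $PGL_3$ is discrete data that you have not shown to be constant along the family; the honest substitute is to enumerate the finitely many weight types of one-parameter subgroups of $PGL_3$ and rule out all but $(1,0,-1)$ using Mukai's genericity conditions (not a sum of five or fewer fourth powers, nor a quadrangle configuration). Second, an invariant curve corresponds to a semi-invariant of \emph{arbitrary} weight, not just weight zero: for weights $(1,0,-1)$ there are nonzero-weight semi-invariant quartics such as $xy(ay^2+bxz)$ and $x^2(ay^2+bxz)$, and these must be explicitly excluded by the same genericity conditions. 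Your normal form $ay^4+bxy^2z+cx^2z^2$, the pencil argument showing tangency at the two base points $[1:0:0]$ and $[0:0:1]$, and the identification of the discriminant locus $b^2=4ac$ with the double-conic (Mukai--Umemura) case are all correct and match the paper's $\Gamma_t$ after a linear change of coordinates; the gaps are entirely in justifying that the quartic of $V\in V^a$ must be of this form.
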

We are also able to compute the quartic associated to the manifold $V^m$:
\begin{pro}The plane quartic associated to $V^m$ in the VSP construction is the sum of two conics tangent to each other at one point with equation
$$P_4(a,b,c)=(c^2-ab)^2+a^4.$$ 
\end{pro}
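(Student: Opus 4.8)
The plan is to exploit the equivariance of Mukai's variety-of-sums-of-powers construction (see \cite{M1}). The one-parameter group $G=\C^{+}\subset\mathrm{Aut}(V^m)$ acts on the anticanonical embedding $V^m\subset\PP^{13}$, hence on the canonically associated plane $\PP^2$, and must preserve the associated quartic $C$. So the problem reduces to (i) identifying the induced $\C^{+}$-action on $\PP^2$ and (ii) determining which invariant quartic occurs. As in the previous proposition the relevant plane may be identified with $\PP(\Sym^2\C^2)$ and the induced action with (a conjugate of) the unipotent one-parameter subgroup of $SL(2)$; in suitable coordinates $[a:b:c]$ it is generated by the vector field $X=2c\,\partial_b+a\,\partial_c$, i.e. $\phi_s(a,b,c)=(a,\,b+2sc+s^2a,\,c+sa)$. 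Computing first integrals of $X$ (namely $Xa=0$ and $X(c^2-ab)=0$) shows that the ring of $\C^{+}$-invariants is generated by $a$ and $q:=c^2-ab$. Hence $C$ lies in the three-dimensional space of invariant quartics $\{\alpha q^2+\beta a^2q+\gamma a^4 : \alpha,\beta,\gamma\in\C\}$.

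Next I would pin down the coefficients geometrically. Writing $u=q=c^2-ab$ and $v=a^2$, every invariant quartic is a binary quadratic $\alpha u^2+\beta uv+\gamma v^2$ and therefore factors over $\C$ as a product of two conics $u-r_1v$ and $u-r_2v$. These two conics meet only at the unique $\C^{+}$-fixed point $[0:1:0]$ and, having the same tangent line $\{a=0\}$ there, are tangent at that point, with Bézout's theorem accounting for the full intersection multiplicity $4$. Thus every nondegenerate member of the family is a pair of conics tangent at one point. It remains to exclude the degenerate members: the repeated factor $u^2$ (a double conic) corresponds to the Mukai--Umemura threefold and is excluded because $V^m\neq V_{MU}$, while the members with $\alpha=0$ contain the double line $a^2$ and cannot arise from a smooth prime Fano threefold. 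The cleanest way to settle both exclusions is to view $V^m$ as the flat limit of the $\C^{*}$-family $V^a$ in which the torus collapses to $\C^{+}$: the two tangency points of the $V^a$-quartic then collide, and the limit is a pair of \emph{distinct} conics tangent at \emph{one} point. This forces $\alpha\neq0$ and $\beta^2-4\alpha\gamma\neq0$.

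Finally I would normalize. The normalizer of $\C^{+}$ preserving $X$ supplies exactly the coordinate change $b\mapsto b+\kappa a$ (which fixes $X$ and realizes $u\mapsto u+\kappa v$) together with the subtorus of diagonal scalings rescaling $u$ and $v$ relative to one another. Using the shift to complete the square in $u$ removes the cross term $\beta uv$ (the condition $\beta^2-4\alpha\gamma\neq0$ guaranteeing that the surviving $v^2$-coefficient is nonzero), and the scaling then brings $\alpha\tilde u^2+\gamma' v^2$ to a scalar multiple of $u^2+v^2$; since the quartic is defined only up to scalar this gives the normal form $P_4(a,b,c)=(c^2-ab)^2+a^4$, as claimed.

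I expect the main obstacle to be the combination of step (i) with the nondegeneracy claim in step (ii). Establishing the VSP-equivariance and the precise shape of the induced action is bookkeeping resting on Mukai's theory and the previous proposition, but proving that the $\C^{+}$-invariant quartic is genuinely a pair of distinct conics — neither a double conic nor a quartic carrying a repeated line — is where the actual geometry enters. I would settle this either through the flat-limit argument from the family $V^a$ sketched above, or, following the computational strategy of Section \ref{sec4}, by writing down an explicit $\C^{+}$-symmetric model of $V^m$ and extracting the quartic together with its coefficients directly by apolarity and catalecticant computations in Macaulay 2.
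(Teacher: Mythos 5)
Your route is genuinely different from the paper's. The paper proves this proposition by direct computation: it exhibits an explicit $\C^{+}$-invariant $3$-plane in $\bigwedge^2 W_7$ (the three forms $\omega_1,\omega_2,\omega_3$ written out at the end of Section \ref{VSP}, invariant under the unipotent matrix acting on $W_7$), checks that the corresponding linear section of $\G(4,7)$ is a smooth Fano threefold, hence is $V^m$, and then computes the associated plane quartic in Macaulay 2, obtaining $32a^4+16a^2b^2-8abc^2+c^4=(c^2-4ab)^2+32a^4$. Your invariant-theoretic frame is attractive, and several steps are correct and even complementary to the paper: the invariant ring of the regular unipotent flow is indeed $\C[a,\,c^2-ab]$, the factorization into two smooth conics with contact of order four at the unique fixed point $[0:1:0]$ is right, the double conic is correctly excluded via $V^m\neq V_{MU}$, and your final normalization (all pairs of distinct conics with fourth-order contact at a point are projectively equivalent) is exactly what identifies the paper's $(c^2-4ab)^2+32a^4$ with the stated normal form $(c^2-ab)^2+a^4$, a reconciliation the paper leaves implicit.

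However, two steps are genuine gaps. (i) You assume the induced $\C^{+}$-action on $\PP^2$ is regular unipotent (a single Jordan block). But $PGL(3)$ also contains the subregular class $(a,b,c)\mapsto(a,b+sa,c)$, whose invariant quartics are the binary quartics in $(a,c)$, i.e.\ four concurrent lines, and for which your computation of the invariant ring fails; this case must be ruled out (it can be: such quartics have Waring rank at most four, violating Mukai's hypotheses), and you also never argue that the induced action on the quartic plane is nontrivial. (ii) More seriously, your exclusion of $\alpha=0$ via a flat limit of the family $V^a$ does not work as stated: even granting that $V^m$ lies in the closure of that family (unproven here), a flat limit of pairs of conics tangent at two points can perfectly well be a double conic --- e.g.\ $(a^2+bc)(ta^2+bc)\to(a^2+bc)^2$ as $t\to1$ --- or something still more degenerate; deciding which degeneration actually occurs is precisely the content of the proposition, so the argument is circular. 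The repair you missed is cheap: for $\alpha=0$, $\beta\neq0$ one has $a^2\bigl(\beta(c^2-ab)+\gamma a^2\bigr)\cong a^2(c^2-ab)=a^2c^2-a^3b$, which is exactly Kleppe's exceptional quartic $x_0x_1^3+x_1^2x_2^2$ recalled at the start of the VSP construction --- the unique quartic that cannot be written as six fourth powers at all --- while $\alpha=\beta=0$ gives $a^4$, of rank one; both are therefore outside Mukai's correspondence. Your fallback option, writing an explicit $\C^{+}$-symmetric model and computing in Macaulay 2, is not a repair of your argument: it is the paper's proof.
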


Our next observation is that any member of $V^a$ has an additional holomorphic involution $\iota$ which does not commute with the $\C^{*}$ action. This was observed earlier in \cite{RST} for  small deformations of the Mukai-Umemura threefold. 

Finally we consider the group $H=<\C^{*},\iota>=\mathbb Z_2\ltimes \C^{*}$, and its compact subgroup $W$ generated by the circle action and $\iota$. Note that by Bando-Mabuchi theorem \cite{BM} if a K\"ahler-Einstein metric exists then one can find a $W$-invariant one. Then an argument that we learned from \cite{PSSW} yields the following:
\begin{thm} The set of elements in $V^{a}$ admitting $W$-invariant K\"ahler-Einstein metrics is open in the Euclidean topology.
\end{thm}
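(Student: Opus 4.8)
The plan is to establish openness by an equivariant implicit function theorem in the category of $W$-invariant K\"ahler potentials, following the method of \cite{PSSW}. Fix an element $V_0\in V^a$ carrying a K\"ahler-Einstein metric; by the Bando-Mabuchi theorem \cite{BM} we may choose this metric $\omega_0$ to be $W$-invariant. Since every member of $V^a$ admits both the $\C^*$-action and the involution $\iota$, the compact group $W$ acts holomorphically on each fibre near $V_0$, and the deformation of $V_0$ within $V^a$ may be realized $W$-equivariantly. For a fibre $V_t$ close to $V_0$ one writes the normalized complex Monge-Amp\`ere equation $(\omega_0+i\pa\bar\pa\f)^3=e^{h_t-\f}\,\omega_0^3$, where $h_t$ is the Ricci potential of a reference metric transported from $V_t$, and seeks a solution $\f$ in an appropriate H\"older space of $W$-invariant functions.

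The crux is the linearization at $(\f,t)=(0,0)$. This is, up to sign, the operator $\Delta_{\omega_0}+\mathrm{id}$ acting on functions, and by the classical Matsushima-Lichnerowicz theory its kernel is exactly the space of holomorphy potentials, i.e. the eigenspace of $\Delta_{\omega_0}$ for the first nonzero eigenvalue $\lambda_1=1$. Under the hypothesis $\mathrm{Aut}^0(V_0)=\C^*$ this kernel is spanned by the holomorphy potential $\theta$ of the generator $v$ of the $\C^*$-action. The decisive point---and the reason the argument works here but would fail for a generic continuous symmetry---is that the semidirect product structure $H=\Z_2\ltimes\C^*$ forces $\iota$ to conjugate the $\C^*$-action to its inverse, so $\iota_*v=-v$ and hence $\iota^*\theta=-\theta$ modulo constants. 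Thus $\theta$ is anti-invariant under $\iota$ and does not lie in the space of $W$-invariant functions. Restricted to $W$-invariant functions the operator $\Delta_{\omega_0}+\mathrm{id}$ therefore has trivial kernel; being formally self-adjoint it has trivial cokernel on the same space, so it is an isomorphism between the corresponding $W$-invariant H\"older spaces.

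Given invertibility of the linearization on the $W$-invariant slice, the implicit function theorem yields, for all $t$ sufficiently near $0$, a $W$-invariant solution $\f_t$ depending smoothly on $t$, and hence a $W$-invariant K\"ahler-Einstein metric $\omega_t=\omega_0+i\pa\bar\pa\f_t$ on $V_t$. Since $V_0$ was an arbitrary element of $V^a$ admitting such a metric, this proves that the set in question is open in the Euclidean topology.

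The principal difficulty I anticipate is technical rather than conceptual: one must check that the $W$-action extends smoothly across the deformation and that the reference data $h_t$, together with the relevant H\"older norms, vary continuously in $t$, so that the implicit function theorem applies with uniform constants. The conceptual ingredient---that passing to $W$-invariant objects removes precisely the holomorphic-vector-field obstruction---is dictated by the anticommutation $\iota_*v=-v$, which is exactly what the earlier structural analysis of $V^a$ was arranged to provide.
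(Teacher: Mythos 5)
Your proposal is correct and follows essentially the same route as the paper: Bando--Mabuchi to reduce to $W$-invariant metrics, the complex Monge--Amp\`ere equation on $W$-invariant potentials, and the implicit function theorem, with invertibility of the linearization $\Delta+\mathrm{id}$ on $W$-invariant functions coming precisely from the anticommutation $\iota\circ\theta_\lambda=\theta_\lambda^{-1}\circ\iota$. The only (inessential) difference is that you verify the vanishing of the $W$-invariant kernel directly via $\iota_{*}v=-v$ and the equivariance of the correspondence between kernel elements and holomorphic vector fields, whereas the paper obtains the same orthogonality statement by invoking Theorem 2 of \cite{PSSW}, whose hypothesis (finite centralizer of $W$ in the stabilizer) is checked using the same anticommutation.
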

\begin{rqe} In view of the recent paper of Donaldson \cite{D3} it seems reasonable that the set above is actually Zariski open. 
\end{rqe}

In our note we heavily rely on Macaulay 2 computations. The relevant scripts are incorporated into the note.

When finishing our note we learned about  Rollin-Simanca-Tipler paper \cite{RST}. There, among other things, the case of small deformations near the Mukai-Umemura exmaple was considered. In particular we would like to point out that the openness of K\"ahler-Einstein manifolds in the family $V^{a}$ follows from their arguments if one knows the existence of the symmetry $\iota$ for all elements of $V^{a}$.  On the other hand our argument seems to be a bit simpler albeit much less general (of course both approaches rely on the implicit function theorem). More importantly it seems to emphasize the group action viewpoint so we decided to include it.  

\section{Notation and basic definitions}
Denote as usual
by $\mathbb{C}^{+}$ the group of complex numbers with addition, and by
$\mathbb{C}^*$ of non-zero complex numbers
 with multiplication.

The Grassmannian of $k$-planes in $\C^n$ will be denoted by $\G(k,n)$.

Recall that a compact complex manifold X is Fano if its first Chern class $c_1(X)$
is positive or,
equivalently, its anti-canonical line bundle $\mathcal{O}_X(-K_X)$ is ample.

Given any Fano threefold the integer $g=\frac12(-K_X)^3 + 1$ is called the genus
of $X$.
If the Picard group of $X$ is generated by $O_X(-K_X)$
we call such a Fano 3-fold prime. Prime Fano threefolds were classified by
Iskovskih \cite{Is1, Is2} with respect to their genus.
In particular we have either $g\leq 10$ or $g=12$.

Given an effective $\Q$-divisor $D$ on a compact complex manifold $X$ the
multiplier ideal sheaf $\J(X,D)$ is defined as follows: if $\mu: X'\rightarrow
X$ is a log resolution then
\begin{equation}\label{multIdeal}
\J(X,D):=\mu_{\ast}\mathcal O_{X'}(K_{X'/X}-[\mu^{\ast}D]),
\end{equation}
where $K_{X'/X}$ denotes the relative canonical bundle, whereas $[A]$ stands for
the round down of the $\Q$-divisor $A$ i.e. $[A]=\sum_i[a_i]A_i$ if
$A=\sum_ia_iA_i$.

An alternative analytic way to define this sheaf (see \cite{De}) is as follows:
take a local trivializing section $\sigma$ of the line bundle $\mathcal O(-D)$
 and equip it with a (singular) metric $h$ such that in any
trivialization
$$||\zeta||_h^2=\frac{||\zeta(z)||^2}{|\sigma(z)|^2}.$$

Then the stalk of $J$ at $z$ is defined by
\begin{equation}\label{stalk}
\J(X,D)_z=\lbr f\in\mathcal O_{X,z}|\ \exists\ neighborhood\ U\ of\ z\ :
\int_U|f|^2e^{-2log|\sigma|}<\infty\rbr.
\end{equation}

The log canonical threshold associated with the pair $(X,D)$ is defined by
\begin{equation}\label{lct}
 lct(X,D):=sup\lbr\lambda\in \Q|\ the\ pair\ (X,\lambda D)\ is\ log\
canonical\rbr,
\end{equation}
where log canonicity is defined by the condition that
$\J(X,(1-\varepsilon)\lambda D)=\mathcal O_X$ for all $0<\varepsilon<1$.
Alternatively, the pair $(X,D)$ is log canonical if there is a log resolution
$\mu: X'\rightarrow X$, such that for a normal crossing divisor
$D'=\sum_iE_i+\mu_{\ast}^{-1}D$ ($E_i$ are the exceptional divisors of the
resolution) we have
$$K_{X'}+D'=\mu^{\ast}(K_X+D)+\sum_ia_iE_i,$$
with all of the coefficients $a_i$ satisfying $a_i\geq-1$.

Given a Fano manifold $X$ the associated global log canonical threshold can be
computed simply by taking the infimum over the all log canonical thresholds
associated to the divisors that are $\Q$-numerically equivalent to $-K_X$: that
is
\begin{equation}\label{linebundle}
lct(X):=inf\lbr lct(X,\frac1nD)| D\in |-nK_X|,\ n\in\N\rbr.
\end{equation}

Analogously one can define a global log canonical threshold $lct(X,L)$ on any
polarized manifold $(X,L)$.

Finally these notions have their $G$-invariant counterparts if $G$ is any
subgroup of  $Aut(X)$:

the global $G$ invariant log canonical threshold is defined by
\begin{equation}\label{Ginvariant}
lct(X,G)=sup\lbr\lambda\in\Q|\ the\ pair (X,\frac{\lambda}nD)\ is\ log\
canonical\ for\ any\ D\ in
\end{equation}
$$ any\ G\ invariant\ linear\ subsystem\ \mathcal D\subset|-nK_X|, n\in\N
\rbr.$$
It should be noted that the stalks of the sheaf $\J(X,D)$ crucially depend on the singularity of the divisor $D$ at a given point.  Roughly speaking the log canonical threshold for a single divisor measures ``how singular'' the divisor can be at a point. The computation of such thresholds is in general quite subtle but there are special cases where fairly general formulas are available. In praticular below we recall the following inequality due to Koll\'ar (\cite{K}, Propositions 8.13 and 8.14) which is particularly useful when one deals with weighted homogenous divisors:
\begin{thm}\label{Kollar} Let $f$ be a holomorphic function near $0\in\C^n$ and let $D=\lbrace f=0\rbrace$. Assign integral weights $w(x_i)$ to the variables and let $w(f)$ be the lowest weight of all monomials occurring in the Taylor expansion of $f$. Then 
$$lct(D)\leq \frac{\sum_i w(x_i)}{w(f)}.$$ 
\end{thm}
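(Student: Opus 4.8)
The inequality is an upper bound on the log canonical threshold, so following the general principle it suffices to exhibit a single divisorial valuation over $\C^n$ which realizes the stated ratio. Recall from the discrepancy description of log canonicity recalled above that if $E$ is any prime divisor lying over $\C^n$ whose center contains the origin, and if $A(E)$ denotes its log discrepancy (that is $A(E)=1+\mathrm{ord}_E(K_{X'/\C^n})$, computed on any model $X'$ on which $E$ appears), while $\mathrm{ord}_E(D)$ is the order of vanishing of $D$ along $E$, then log canonicity of $(\C^n,\lambda D)$ near $0$ forces
$$\lambda\le \frac{A(E)}{\mathrm{ord}_E(D)}.$$
Passing to the supremum over admissible $\lambda$ yields $lct(D)\le A(E)/\mathrm{ord}_E(D)$ for every such $E$, so the whole argument reduces to producing one good divisor.

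The natural candidate is the exceptional divisor of the weighted blow-up. Since the weights $w(x_i)$ are (positive) integers, I would form the toric weighted blow-up $\mu\colon Y\to\C^n$ of the origin with weights $(w(x_1),\dots,w(x_n))$. This is a proper birational toric morphism whose exceptional locus is a single prime divisor $E$ with center exactly $\{0\}$; the variety $Y$ carries at worst cyclic quotient singularities but is $\Q$-Gorenstein, so the discrepancy along $E$ is well defined. Two standard toric computations are then needed. First, the monomial $\prod_i x_i^{a_i}$ has order $\sum_i a_i\,w(x_i)$ along $E$, hence for $f=\sum_a c_a\prod_i x_i^{a_i}$ one gets $\mathrm{ord}_E(f)=\min\{\sum_i a_i\,w(x_i)\mid c_a\neq 0\}$, which is precisely $w(f)$ by the definition of the lowest weight. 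Second, the relative canonical divisor of the weighted blow-up is
$$K_{Y}=\mu^{\ast}K_{\C^n}+\Bigl(\sum_i w(x_i)-1\Bigr)E,$$
so that the log discrepancy equals $A(E)=\sum_i w(x_i)$.

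Substituting these two values into the inequality of the first paragraph gives
$$lct(D)\le \frac{A(E)}{\mathrm{ord}_E(D)}=\frac{\sum_i w(x_i)}{w(f)},$$
which is the assertion. The one genuine subtlety, and the step I would treat most carefully, is the discrepancy computation on the singular model $Y$: one must either read $K_{Y}=\mu^{\ast}K_{\C^n}+(\sum_i w(x_i)-1)E$ directly off the fan of the weighted blow-up (the new ray is the primitive vector $(w(x_1),\dots,w(x_n))$, and the toric discrepancy formula produces the coefficient $\sum_i w(x_i)-1$), or else resolve $Y$ further to an honest log resolution and observe that $A(E)$ and $\mathrm{ord}_E(D)$, being invariants of the valuation $\mathrm{ord}_E$, are unchanged. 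Once this is in place the estimate is immediate. The reason only an inequality is obtained is that a differently chosen divisor $E'$ over $\C^n$ may have a strictly smaller ratio $A(E')/\mathrm{ord}_{E'}(D)$; equality holds precisely when the chosen weighting detects the most singular behaviour of $f$, as happens for instance when $f$ is weighted homogeneous for the weights $w(x_i)$.
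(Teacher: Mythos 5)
Your argument is correct, but there is nothing in the paper to compare it against: the paper does not prove this statement at all, it simply recalls it as a known inequality of Koll\'ar, citing Propositions 8.13 and 8.14 of \cite{K}. Your proof is, in essence, the standard argument that underlies Koll\'ar's result: apply the valuative criterion of log canonicity to the exceptional divisor $E$ of the weighted blow-up with weights $(w(x_1),\dots,w(x_n))$, using the two toric computations $\mathrm{ord}_E(f)=w(f)$ and $A(E)=\sum_i w(x_i)$ (equivalently $K_Y=\mu^{\ast}K_{\C^n}+(\sum_i w(x_i)-1)E$). All the steps you flag as delicate are handled correctly: the valuative inequality $lct(D)\le A(E)/\mathrm{ord}_E(D)$ only needs $E$ to have center through the origin, which holds here, and the discrepancy on the $\Q$-Gorenstein toric model $Y$ is legitimate to use, since $A(E)$ and $\mathrm{ord}_E$ are invariants of the valuation and can equally be read off any higher log resolution. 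One small point you glossed over: the vector $(w(x_1),\dots,w(x_n))$ is primitive only when $\gcd_i w(x_i)=1$; in general one should pass to the primitive generator $\bigl(w(x_i)/g\bigr)_i$ with $g=\gcd_i w(x_i)$, which rescales both $A(E)$ and $\mathrm{ord}_E(D)$ by $1/g$ and so leaves the ratio, and hence the stated bound, unchanged. With that caveat noted, your proof is complete and self-contained, which is more than the paper provides.
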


We refer to \cite{La} and \cite{K} for more background and to the recent article \cite{Che}
for many explicit computations of log canonical thresholds.

The log canonical threshold can also be computed analytically (see the appendix
to \cite{Che} by J. P. Demailly) and it equals the so-called $\alpha$ invariant
defined by Tian (\cite{Ti1}).

The fundamental fact, proven by Tian \cite{Ti1}, is that if the alpha invariant is {\it large enough} then the manifold admits a K\"ahler-Einstein metric.
\begin{thm}
 If for a compact group $G$ the log canonical threshold (equivalently the alpha invariant) satisfies
$$lct(X,G)>\frac{n}{n+1},\ \ n=dim_{\mathbb C}X,$$
then $X$ admits a  $G$-invariant K\"ahler-Einstein metric.
\end{thm}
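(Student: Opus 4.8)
The plan is to prove this (which is Tian's criterion) by the Aubin--Yau continuity method, arranging things so that the hypothesis on $lct(X,G)$ enters only at the single decisive step, the $C^{0}$-estimate. Since $X$ is Fano, fix a $G$-invariant reference K\"ahler form $\omega_{0}\in c_{1}(X)$ and write its Ricci potential $h_{0}$, normalised by $\mathrm{Ric}(\omega_{0})-\omega_{0}=i\pa\bar\pa h_{0}$ and $\int_{X}e^{h_{0}}\omega_{0}^{n}=V:=\int_{X}\omega_{0}^{n}$. I would then study the family of complex Monge--Amp\`ere equations
$$(\omega_{0}+i\pa\bar\pa\varphi_{t})^{n}=e^{h_{0}-t\varphi_{t}}\,\omega_{0}^{n},\qquad t\in[0,1],$$
for which a solution with $\omega_{\varphi_{t}}:=\omega_{0}+i\pa\bar\pa\varphi_{t}>0$ is equivalent to $\mathrm{Ric}(\omega_{\varphi_{t}})=t\,\omega_{\varphi_{t}}+(1-t)\omega_{0}$, the value $t=1$ giving a K\"ahler--Einstein metric. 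All the data being $G$-invariant and $G$ compact, averaging over $G$ shows any solution may be taken $G$-invariant, so the entire path remains inside the space of $G$-invariant potentials.

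Next I would establish openness of the solvable set $S\subset[0,1]$. At a solution the linearisation of the equation is $v\mapsto\Delta_{\omega_{\varphi_{t}}}v+tv$; for $t<1$ the lower bound $\mathrm{Ric}(\omega_{\varphi_{t}})=t\,\omega_{\varphi_{t}}+(1-t)\omega_{0}>t\,\omega_{\varphi_{t}}$ forces, via the Bochner formula, the first nonzero eigenvalue of $-\Delta_{\omega_{\varphi_{t}}}$ to exceed $t$, so the linearised operator is invertible and the implicit function theorem in H\"older spaces gives openness on $[0,1)$; moreover $0\in S$ by Yau's solution of the Calabi problem. The real content is closedness, and here Yau's a priori estimates from the Calabi conjecture reduce everything to one uniform bound: it suffices to produce a constant $C$, independent of $t$, with $\|\varphi_{t}\|_{C^{0}}\le C$, after which elliptic bootstrapping closes $S$ and produces the metric.

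The uniform $C^{0}$-estimate is the heart of the matter and the step I expect to be hardest. Normalising $\sup_{X}\varphi_{t}=0$, only a lower bound on $\inf_{X}\varphi_{t}$ is needed. The hypothesis, via the identification of $lct(X,G)$ with Tian's $\alpha$-invariant (the analytic characterisation recalled above), yields some $\alpha>\frac{n}{n+1}$ and a constant $C_{\alpha}$ with
$$\int_{X}e^{-\alpha\varphi}\,\omega_{0}^{n}\le C_{\alpha}$$
for \emph{every} $G$-invariant $\omega_{0}$-plurisubharmonic $\varphi$ with $\sup_{X}\varphi=0$; this Moser--Trudinger-type bound replaces the usual Sobolev inequality, and it applies to each $\varphi_{t}$ precisely because the path is $G$-invariant. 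I would then bring in Aubin's energy functionals
$$I(\varphi)=\frac1V\int_{X}\varphi\,(\omega_{0}^{n}-\omega_{\varphi}^{n}),\qquad J(\varphi)=\frac1V\int_{0}^{1}\!\!\int_{X}\dot{\varphi}_{s}\,(\omega_{0}^{n}-\omega_{\varphi_{s}}^{n})\,ds,$$
which satisfy $0\le I-J\le\frac{n}{n+1}I$, compute $\frac{d}{dt}\big(I(\varphi_{t})-J(\varphi_{t})\big)$ along the path, and rewrite the normalisation of the equation as $\int_{X}e^{h_{0}-t\varphi_{t}}\omega_{0}^{n}=V$. Feeding the integral bound into this relation and into the functional inequality gives, when $\alpha>\frac{n}{n+1}$, an a priori bound on $I(\varphi_{t})$, whence on $\frac1V\int_{X}\varphi_{t}\,\omega_{0}^{n}$; a Green's-function/Harnack argument (using $\mathrm{tr}_{\omega_{0}}(i\pa\bar\pa\varphi_{t})\ge -n$) finally converts this into the two-sided $C^{0}$-bound. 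The delicate point is the bookkeeping that produces exactly the threshold $\frac{n}{n+1}$: the same sharp constant appears in the functional inequality $I-J\le\frac{n}{n+1}I$ and in the hypothesis $\alpha>\frac{n}{n+1}$, and it is the matching of these two constants, rather than any single estimate, that makes the system closeable instead of circular.
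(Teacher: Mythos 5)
The paper offers no proof of this statement at all: it is quoted as Tian's theorem with a citation to [Ti1], so there is no in-paper argument to compare against. Your sketch is precisely the argument of that cited source --- the Aubin--Yau continuity method with openness from the Bochner eigenvalue estimate, closedness reduced via Yau's estimates to a uniform $C^0$ bound, and the $C^0$ bound extracted from the $\alpha$-invariant together with the Aubin functionals --- so in that sense it takes the same (indeed the canonical) route, and the architecture is correct.

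Three points in your write-up are misstated and would need repair in a full proof, though all have standard fixes. First, ``averaging over $G$'' does not produce solutions: the Monge--Amp\`ere equation is nonlinear, so the $G$-average of a solution is not a solution. Invariance of the path $\varphi_t$ (which you genuinely need, since the hypothesis only bounds $\int_X e^{-\alpha\varphi}\omega_0^n$ for \emph{invariant} $\varphi$) should instead come from uniqueness of solutions for $t>0$, or equivalently from running the entire continuity scheme inside $G$-invariant H\"older spaces from the start --- which your implicit-function-theorem setup does implicitly. Second, openness fails at $t=0$ as you state it: the linearisation there is $\Delta_{\omega_0}$, whose kernel contains the constants, so invertibility does not follow from the eigenvalue bound; one needs the classical extra argument (work modulo constants, or Aubin's lemma that solvability at $t=0$ yields solvability for small $t>0$). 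Third, in the final step the inequality $\mathrm{tr}_{\omega_0}(i\partial\bar\partial\varphi_t)\ge -n$ combined with the Green's function of the \emph{fixed} metric $\omega_0$ only gives the easy direction $\sup_X\varphi_t\le \frac1V\int_X\varphi_t\,\omega_0^n+C$; the hard direction, the lower bound on $\inf_X\varphi_t$, is the Harnack-type inequality $-\inf_X\varphi_t\le n\sup_X\varphi_t+C$, whose proof requires the Green's function (or Sobolev constant) of the \emph{evolving} metric $\omega_{\varphi_t}$, controlled by the Ricci lower bound $\mathrm{Ric}(\omega_{\varphi_t})\ge t\,\omega_{\varphi_t}$ along the path; the factor $n$ there is where the threshold $\frac{n}{n+1}$ actually crystallises, consistently with your remark that it is the matching of constants, rather than any single estimate, that closes the argument.
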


\section{Prime Fano threefolds $V$ of genus $12$.}
In this section we recall  several equivalent constructions of Mukai threefolds:

\subsection{Constructions of $V_{12}$ I: Variety of sum of powers}
Let us start by recalling an algebraic fact proved by Kleppe \cite{Kl}, see also
\cite{Pa}:

Any homogenous polynomial of three complex variables of degree 4 can be written
as at most 7 fourth powers of linear forms. Moreover, there is only one (up to a
linear change of variables) polynomial $x_0x_1^3+x_1^2x_2^2$ that cannot be
written as six fourth powers of linear forms.

This Waring type decomposition will be crucial in the construction of the
manifolds in $V_{12}$ that we present below.

Let $F_4=0=C\subset\PP^2$ be a quartic curve such that the quartic defining $C$
cannot be written as the sum of five (or less) fourth powers of linear forms or
a sum of six fourth powers of linear forms defining a quadrangle with its
diagonals.
Then we construct the associated manifold $VSP(C,6)$ by
\begin{equation}\label{Mukai}
VSP(C,6):=cl(\lbr([l_1],\cdots,[l_6])\in \PP^{2,\ast}|\
F_4\in<l_1^4,\cdots,l_6^4>_{\C}\rbr/S_6),
\end{equation}
where $cl$ denotes the closure in the Hilbert scheme of six points in the dual
projective plane $\PP^{2,\ast}$ and the whole construction is made invariant under the permutation group $S_6$.

Mukai observed \cite{M} that each Fano threefold $V_{12}$ is isomorphic to \newline
$VSP(C,6)$ for some quartic as above, moreover
 such $C$ is unique up to isomorphism. In particular the Mukai-Umemura threefold corresponds to $C$  being a
double conic (see \cite{M}).

Our proposition classifies the family $V^a$ and the manifold $V^m$ in this
moduli:
\begin{pro} The manifolds from the family $V^a$ corresepond to \newline
$VSP(\Gamma_t,6)$, with the curve $\Gamma_t$ being a pair of conics tangent at two points. The manifold $V^m$ corresponds to a pair of conics tangent at one point. 
\end{pro}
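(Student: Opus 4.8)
The plan is to transfer the problem from the threefold to its associated quartic and then classify the admissible quartics by their symmetry. By Mukai's theorem (\cite{M}) the correspondence $C\mapsto VSP(C,6)$ recovers $V$ from $C$ and determines $C$ canonically up to projective isomorphism; since the construction \eqref{Mukai} is manifestly $\mathrm{PGL}(3)$-equivariant, a projective linear transformation of $\PP^2$ preserving $C$ induces an automorphism of $V=VSP(C,6)$, and by canonicity every automorphism of $V$ arises this way. A nontrivial connected group acting on $V$ cannot fix $C$, for otherwise it would act trivially on the reconstructed $V$; hence the identity component of $Aut(V)$ embeds into $\{g\in\mathrm{PGL}(3):\ g(C)=C\}$. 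By Prokhorov's classification (Theorem \ref{P}) it therefore suffices to find the $VSP$-admissible quartics whose projective stabilizer contains a copy of $\C^{*}$ (for $V^{a}$) or of $\C^{+}$ (for $V^{m}$), and to read off their geometry.

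For the torus case I would diagonalize the $\C^{*}\subset\mathrm{PGL}(3)$, so that in suitable coordinates $(a:b:c)$ it acts with integral weights $(w_1,w_2,w_3)$. Running through the possible weight systems and discarding those for which the invariant quartic is empty, non-reduced, or otherwise fails the admissibility conditions in \eqref{Mukai}, the only surviving system is the one proportional to $(2,-2,0)$ --- the weights of the maximal torus of $SL(2)$ on $\PP^{2}=\PP(\Sym^{2}\C^{2})$, for which the Mukai--Umemura double conic is the $SL(2)$-invariant discriminant $c^{2}-ab$. The weight-zero quartics are then spanned by $a^{2}b^{2},\ abc^{2},\ c^{4}$, i.e. they are exactly the binary quadratics in the forms $ab$ and $c^{2}$, so each invariant quartic factors as $\alpha\,(c^{2}-r_1ab)(c^{2}-r_2ab)$, a pair of conics in the pencil $\langle ab,c^{2}\rangle$. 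This pencil has base points $(1:0:0)$ and $(0:1:0)$, and at each of them every member is tangent to the same line ($b=0$, resp. $a=0$); thus the two conics are mutually tangent at both base points. They coincide precisely when $r_1=r_2$, recovering the double conic of $V_{MU}$, while for $r_1\neq r_2$ one gets two distinct smooth conics tangent at two points. The residual diagonal torus rescales $ab$ against $c^{2}$ and so collapses $(r_1,r_2)$ to the single modulus $r_1:r_2$, reproducing the one-parameter family $V^{a}$.

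The unipotent case is handled in the same coordinates. Putting $\C^{+}\subset\mathrm{PGL}(3)$ into Jordan form, on $\PP(\Sym^{2}\C^{2})$ it is a principal unipotent subgroup; its ring of invariants is generated by a fixed coordinate $a$ and the discriminant $\Delta=c^{2}-ab$, so the invariant quartics are the binary quadratics in $a^{2}$ and $\Delta$, factoring as products of two conics in the pencil $\langle c^{2}-ab,\ a^{2}\rangle$. This pencil has the single base point $(0:1:0)$, at which every member is tangent to $\{a=0\}$, so two distinct members meet only there, with contact of order four by B\'ezout: a pair of conics tangent at one point. The normal form $(c^{2}-ab)^{2}+a^{4}=(c^{2}-ab+ia^{2})(c^{2}-ab-ia^{2})$ is exactly such a member, matching the description of $V^{m}$ stated above.

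The group theory here is light; the main obstacle is the admissibility bookkeeping together with making the correspondence $Aut(V)\cong\mathrm{Stab}(C)$ fully precise. I must verify that the quartics produced genuinely satisfy the conditions of \eqref{Mukai} (reducedness, the correct singularity type, and the exclusion of the special Waring decompositions), that they correspond to smooth members of $V_{12}$ lying in $V^{a}$, respectively to $V^{m}$, rather than to other threefolds or degenerate limits, and --- independently of the Mukai--Umemura heuristic --- that the weight system is truly forced to be $(2,-2,0)$. I expect to settle these points using the explicit $V_5$- and Grassmannian-descriptions together with the Macaulay 2 computation of Section \ref{sec4}, which simultaneously confirm that the moduli of the tangent-conic configurations match exactly the family $V^{a}$ and the single threefold $V^{m}$.
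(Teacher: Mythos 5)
Your reduction of the statement to a classification of $\C^{*}$- and $\C^{+}$-invariant plane quartics rests on the claim that the identity component of $Aut(V)$ embeds into $\mathrm{Stab}(C)=\lbrace g\in \mathrm{PGL}(3): g(C)=C\rbrace$, justified ``by canonicity.'' This is a genuine gap, not bookkeeping. What Mukai's theorem, as cited in the paper, provides is only that $C$ is unique \emph{up to isomorphism}, i.e.\ that $VSP(C,6)\cong VSP(C',6)$ forces $C\cong C'$. An isomorphism-class statement of this kind does not produce a group homomorphism $Aut(V)\to Aut(\PP^2,C)$: for that one needs an intrinsic, automorphism-equivariant reconstruction of the \emph{embedded} quartic from $V$, and your supporting sentence (``a nontrivial connected group acting on $V$ cannot fix $C$, for otherwise it would act trivially on the reconstructed $V$'') presupposes exactly the correspondence it is meant to establish. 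Moreover, even in the forward direction you tacitly need that the map $\mathrm{Stab}(\Gamma_t)\to Aut(VSP(\Gamma_t,6))$ is injective (otherwise the $\C^{*}$-symmetry of the quartic could induce the trivial action on the threefold); this too requires an argument.

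It is instructive that the paper's own proof is organized precisely to avoid the functoriality you assume. It uses only the easy direction, with an explicit nontriviality argument (if the induced automorphism were trivial, each $l_j$ would be proportional to a coordinate, forcing $(a^2+bc)(ta^2+bc)$ to be a combination of $a^4,b^4,c^4$), and then closes the loop --- i.e.\ shows every $V\in V^a$ arises from a tangent conic pair --- by combining Prokhorov's one-parameter count with Donaldson's identification of the families and with Lemma \ref{lem-conic}: the Hilbert scheme of lines, which \emph{is} intrinsic to $V$, equals the covariant quartic of $\Gamma$, covariant quartics of tangent conic pairs are tangent conic pairs with the same number of tangencies, and every such pair is a covariant quartic of one. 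That chain is exactly the equivariant-reconstruction input your ``canonicity'' step is missing; if you import it (or a precise reference identifying $Aut(V)$ with $Aut(\PP^2,C)$, which is stronger than what \cite{M} is quoted for), your argument goes through, and your weight-by-weight classification of invariant quartics is correct and in fact finer than anything the paper does --- the paper never classifies invariant quartics, and it obtains the $V^m$ statement by a direct Macaulay 2 computation of the quartic from the $\C^{+}$-invariant $3$-plane rather than by your pencil $\langle c^2-ab,\,a^2\rangle$ analysis. Two smaller points you would also need to record: semi-invariant quartics of nonzero weight (and the subregular unipotent class) must be discarded explicitly --- they all contain a coordinate line or are cones, hence are inadmissible --- and smoothness/admissibility of $VSP(\Gamma_t,6)$ cannot be waved at, since it is the other place where the paper has to invoke \cite{P1}, \cite{D1}.
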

The proof is given in Section \ref{VSP}.

\subsection{Constructions of $V_{12}$ II: Birational morphisms to $V_5$} Let $V_5$ be a smooth Fano threefold with
$Pic(V_5)=Z[H]$, such that $H^3=5$. Then $K_{V_5}=-2H$. Explicitly $V_5$ can be realized exploiting the 
embedding given by $H$ and it is defined as a generic codimension $3$
linear section of the Grassmanian $\G(2,5)\subset \PP^9$.

In such a way we can see that $V_5$ is rigid and can be described as follows:
consider the natural action of $SL_2(\mathbb C)$ on $\PP(\C_6[x,y])$. Then $V_5$ can be
seen as the closure of the orbit $U$ of $xy(x^4-y^4)$ by this
action. One can check that the manifold admits a stratification 
$$V_5=U\cup R\cup C, $$ 
where $R$ is the orbit of
$x^5y$ and $C$ is the orbit of $x^6$.

Denote by $(x_0,x_1,x_2,x_3,x_4,x_5,x_6)$ the coordinates corresponding to
\newline $(x^6,x^5y,x^4y^2,x^3y^3,x^2y^4,xy^5,y^6)$ in $\PP(\C_6[x,y])$.
Consider the divisors $F_1$ and $F_2$ defined by $x_0=0$ and $x_6=0$
respectively.

The orbits of the action of $\C^*$ on $V_5$ have four fixed points corresponding
to $x^6$, $x^5y$, $y^5x$ and $y^6$.
Let us describe the other orbits:
\begin{itemize} \item the closure of the generic orbit of $\C^*$ is a rational
curve of degree $6$ such that the closure of such a curve contains the points
$x^6$ and $y^6$. These orbits cover the set $V_5-(F_1\cup F_2)$.
\item The line passing through $x^5y$ and $y^6x$ is the closure of an orbit that
we will denote by $f$.
\item The closure of the remaining orbits are rational normal curves of degree
$5$, such that $F_2\setminus F_1$ (resp. $F_1\setminus F_2$) is covered by orbits whose closure
contains the points $x^6$ and $y^5x$ (resp. $y^6$ and $x^5y$). Moreover,
$F_1\cap F_2$ is the sum of $f$ and a rational
normal curve of degree $5$ that we denote by $t$.
\end{itemize}
Recall now the Prokhorov construction of threefolds from the family $V^a$ \cite{P1}.
Let us choose $r\subset F_1 \subset V_5$ a rational normal curve of degree $5$
different from $t$ being the closure of an orbit of the $\C^*$ action.

 Next we define the following diagram of birational maps:
first $\varphi\colon W \to V_5$ is the blow-up of $V_5$ at the curve $r$,
$Y=\varphi^{-1}(Y)$.
Let $\Sigma\subset W$ be the strict transform of $t$ and
$\rho: W \to W'$ be a flop of $\Sigma$. Finally let $\pi : W'\to V$ be a
contraction of an extremal ray.
Then $V$ is a smooth prime Fano threefold of genus $12$ with identity component of the automorphism
group isomorphic to $\C^{*}$. Furthermore the second Betti number $b_2$ equals $1$. The morphism $\pi$ contracts the divisor $F'\subset W'$, being
the strict transform of $F_1$, to a line $Z \subset V^a$ with normal bundle
$\mathcal{O}_Z(1)\oplus\mathcal{O}_Z(-2)$. Moreover, $F'$ is isomorphic to the
Hirzebruch surface $\mathbb{F}^3$
with negative section $\Sigma'$. The map $\rho^{-1}\colon W'\to W$ is the flop
of $\Sigma'$.
There is a one parameter space of degree 5 orbits as above.
The important observation stemming from this construction is the following corollary:
\begin{cor}\label{smoothV^a}
 All the threefolds arising from this construction, form a one parameter family of smooth Fano threefolds
 with automorphism group containing $\C^{\ast}$.
\end{cor}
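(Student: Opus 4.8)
The plan is to observe that the corollary repackages the Prokhorov construction just described, so the proof reduces to three checks: that every birational modification in the diagram is $\C^{*}$-equivariant (so that the fixed $\C^{*}$ action on $V_5$ is inherited by $V$); that smoothness and the Fano property are preserved (which is precisely the content of the construction, following \cite{P1}); and that the space of admissible input curves $r$ is one-dimensional.

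The heart of the argument is equivariance, and I would track the action step by step. The curve $r$ was chosen as the closure of a $\C^{*}$-orbit inside $F_1$, hence $r$ is a $\C^{*}$-invariant subvariety. Since blowing up along an invariant smooth center is canonical, the action lifts uniquely to the blow-up $\varphi\colon W\to V_5$, and the exceptional divisor together with all the orbit closures listed above are invariant. In particular the strict transform $\Sigma$ of the invariant curve $t$ is again invariant in $W$. The flop $\rho\colon W\to W'$ is determined by the flopping curve $\Sigma$, so invariance of $\Sigma$ forces $\rho$ to be $\C^{*}$-equivariant and $W'$ to carry the induced action. Finally, the strict transform $F'$ of the invariant divisor $F_1$ is invariant, so the extremal ray contracted by $\pi$---spanned by a curve class in $F'$---is $\C^{*}$-stable; as an extremal contraction is canonical, $\pi$ is equivariant and descends the action to $V$. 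Because $V_5\dashrightarrow V$ restricts to an isomorphism over a dense open set on which the action is nontrivial, the induced $\C^{*}$ on $V$ is nontrivial, whence $\C^{*}\subset Aut(V)$.

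Next, smoothness and the Fano property of $V$ are exactly what the construction records: the blow-up of the smooth threefold $V_5$ along the smooth curve $r$ is smooth; the flop of $\Sigma$ preserves smoothness; and $\pi$ contracts $F'\cong\mathbb{F}^3$ to the smooth line $Z$ with normal bundle $\mathcal O_Z(1)\oplus\mathcal O_Z(-2)$, leaving $V$ a smooth Fano threefold of genus $12$ with $b_2=1$. Finally, the admissible choices of $r$---degree-$5$ rational normal curves in $F_1$ that are closures of $\C^{*}$-orbits and distinct from $t$---form a one-parameter space, as noted in the construction; letting $r$ range over this space yields the asserted one-parameter family of smooth Fano threefolds.

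The step requiring the most care is confirming that the flop $\rho$ is canonically attached to $\Sigma$, so that its equivariance is automatic rather than dependent on a compatible choice of small resolution. This rests on $\Sigma$ being a genuine flopping curve with the expected normal bundle, which follows from the local analytic structure of $W$ along $\Sigma$ produced by the blow-up; it is the one point at which equivariance is not purely formal.
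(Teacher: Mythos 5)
Your proposal is correct, and in substance it follows the paper's route: the paper offers no separate proof of this corollary at all, recording it as an immediate observation stemming from the Prokhorov construction just described. The real difference lies in what is quoted versus what is proved. The paper imports from \cite{P1} the full statement that $V$ is a smooth prime Fano threefold of genus $12$ whose identity component of the automorphism group is $\C^{*}$, so for the paper the corollary reduces to the single remark that the admissible curves $r$ form a one-parameter space. You instead invoke \cite{P1} only for smoothness and the Fano property, and re-derive the group action yourself by tracking $\C^{*}$-equivariance through the three birational steps: the blow-up of the invariant center $r$, the flop of the invariant curve $\Sigma$ (equivariant because the flop is unique once it exists), and the extremal contraction $\pi$ (equivariant because the connected group $\C^{*}$ acts trivially on the cone of curves, hence preserves the contracted ray), concluding nontriviality of the induced action from the birational identification with $V_5$. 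Your version is more self-contained in that it does not rest on Prokhorov's stronger determination of the identity component, at the cost of the extra verifications that each step is canonical; your closing caveat, that one must know $\Sigma$ is genuinely a flopping curve for the equivariance of $\rho$ to be automatic, is precisely the point the paper absorbs into the citation of \cite{P1}.
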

These threefolds are parameterized by the choice of the degree $5$ orbit contained in $F_1$.
It is interesting to study the ``boundary'' of this family. In particular it is natural to ask what kind of singularities can occur there.

\subsection{Constructions of $V_{12}$ III: Zero sections of a homogenous bundle on $\G(4,7)$} \label{description zero locus of vb}
Below a third construction is presented (compare \cite{Ti2,D1}): Let $T$ denote
the universal quotient vector bundle on the Grassmanian $\G(4,7)$. Let $H$ be the (very ample) line bundle $detT$. Consider a section $s$ of the
bundle $\Lambda^2T\oplus \Lambda^2T\oplus\Lambda^2T$. It can be computed that
$c_1(3\Lambda^2E)=6H$. If the subvariety $X_s$ cut out by $s$ is smooth then by
adjunction formula
$$K_{X_s}=K_{\G(4,7)}+c_1(3\Lambda^2E)|_{X_s}=-7H+c_1(E)|_{X_s}=-H,$$
thus $X_s$ is a Fano variety.

Alternatively such a section can be constructed as follows:
take a $3$-dimensional plane $P\subset\Lambda^2\C^7$, then $X$ is defined by
$$X_P=\lbr M\in\G(4,7)|\ P|_M=0\rbr.$$

It is known (see \cite{Ti}) that the space of holomorphic vector fields
of $X_P$ can be identified with the set of matrices $A$ in $sl(7,\C)$ whose
induced action on $\Lambda^2\C^{7}$ preserves $P$.\footnote{Recall that the group $sl(7,\C)$ acts on $\Lambda^2\C^{7}$ by
$g(v_1\wedge v_2)=g(v_1)\wedge v_2+v_1\wedge g(v_2).$} Also the automorphisms of $X_P$ correspond to elements in $SL(7,\C)$ which preserve $P$.\footnote{The action is given by $g(v_1\wedge v_2)=g(v_1)\wedge g(v_2)$.}
Then the orbit of $SL(7,\C)P$ will be generically of dimension $dim [sl(7,\C)]$,
except in the special cases when the stabilizer is positive dimensional.

Below we shall describe the equations of elements of $V^a$.

 Let $W_7$ denote a vector space of dimension 7 and Let 
 $L\subset \bigwedge^2 W_7$ be a generic subspace 
of dimension 3. Then we obtain a Fano threefold of genus 12 as:
$$V^{L}_{12}=\{\alpha \in G(4,7)\subset \mathbb{P}(\bigwedge^4 W_7): \alpha \wedge \omega=0\ \ \forall \omega\in L\}=G(4,7)\cap P_L.$$
Hence the equations defining the $\mathbb{P}^{13}$ section of $G(4,7)$ are given by elements of the form 
$\omega_i\wedge \in \bigwedge^3 W_7 =(\bigwedge^4 W_7)^*$, where $(\omega_1, \omega_2, \omega_3)$ is a basis of $L$.

Any such basis vector can be written as $\omega=\sum_{i<j}a_{ij}e_i\wedge e_j$, with $e_i$,\newline $i=-3,-2,\cdots 2,3$ denoting the standard basis of $1$ forms dual to the canonical basis of $W_7$ and $a_{ij}$ are constants. Thus a generic form has $21$ components. The following technical lemma states that if $V_{12}^{L}$ is smooth then none of the basis vectors can have less than three nonzero components: 

\begin{lem} \label{slocus of dim 6}
\begin{enumerate}
 \item The singular locus of a submanifold $F_{\omega} \subset \G(4,7)$ given by the zero section of the bundle $\bigwedge^2T$
corresponding to a form \newline $\omega=e_i\wedge e_j+e_k\wedge e_l$ for some $i,j,k,l$ is of dimension $\geq 6$. 
\item The locus of a submanifold $F_{\omega} \subset \G(4,7)$ given by the zero section of the bundle $\bigwedge^2T$
corresponding to a form $\omega=e_i\wedge e_j$ is of dimension $\geq 10$.
\item If $L=span\lbrace\omega_1,\ \omega_2,\ \omega_3\rbrace$ produces a smooth Fano manifold, then none of the basis vectors $\om_i$ can be of the form as above.
\end{enumerate}
\end{lem}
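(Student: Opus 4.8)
The plan is to realise each $F_\omega$ as the zero scheme of the section $s_\omega$ of $\bigwedge^2 T$ obtained by pushing the constant $\omega\in\bigwedge^2 W_7$ through $\bigwedge^2 W_7\otimes\mathcal O\twoheadrightarrow\bigwedge^2 T$. Over a point $M\in\G(4,7)$ the fibre of $T$ is $Q_M=W_7/M$, so $s_\omega(M)$ is the image $\bar\omega$ of $\omega$ in $\bigwedge^2 Q_M$, whence
$$F_\omega=\{M:\ \omega\in M\wedge W_7\}=\{M:\ \bar\omega=0\ \text{in}\ \textstyle\bigwedge^2(W_7/M)\}.$$
Writing $\Pi=\supp(\omega)$ for the support of $\omega$ (the image of the contraction $W_7^{*}\to W_7$, of dimension equal to the rank of $\omega$), the first step is to translate the vanishing of $\bar\omega$ into the incidence condition that $\dim(M\cap\Pi)$ be large; since $\dim Q_M=3$ this is a finite case analysis according to $\dim(M\cap\Pi)$. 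Throughout, $\G(4,7)$ has dimension $12$ and $\bigwedge^2 T$ has rank $3$, so the expected dimension of $F_\omega$ is $9$ and that of $V:=V_{12}^{L}=F_{\omega_1}\cap F_{\omega_2}\cap F_{\omega_3}$ is $3$.

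For part (2), $\omega=e_i\wedge e_j$ has $\Pi=\langle e_i,e_j\rangle$ of dimension $2$, and a direct check gives $\bar\omega=0$ exactly when $M\cap\Pi\neq 0$. Thus $F_\omega$ is the Schubert variety $\{M:\dim(M\cap\Pi)\geq 1\}$, whose codimension is $1\cdot(7-4-2+1)=2$, so $\dim F_\omega=10$. This immediately feeds into part (3): if some basis vector, say $\omega_1$, has this shape, then cutting the $10$-dimensional $F_{\omega_1}$ by the two rank-$3$ sections $s_{\omega_2},s_{\omega_3}$ drops the dimension by at most $3$ each, so a nonempty $V$ has $\dim V\geq 10-3-3=4>3$ and cannot be the desired Fano threefold.

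For part (1), $\omega=e_i\wedge e_j+e_k\wedge e_l$ has $\Pi$ of dimension $4$; since $\dim M+\dim\Pi=8>7$ one always has $\dim(M\cap\Pi)\geq 1$. The case analysis shows that $\bar\omega=0$ fails when $\dim(M\cap\Pi)=1$, holds on a Lagrangian (hence codimension-one) sublocus of the stratum $\dim(M\cap\Pi)=2$, and holds identically on $\Sigma:=\{M:\dim(M\cap\Pi)\geq 3\}$; a count (a $3$-plane $U\subset\Pi$ followed by $M\supset U$) gives $\dim\Sigma=6$, while $\dim F_\omega=9$. The heart of the matter is to prove $\Sigma\subset\sing(F_\omega)$. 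Using $T_M\G(4,7)=\Hom(M,Q_M)$ together with the differential of $s_\omega$ at a zero, namely $ds_\omega(\phi)=-\sum_a\phi(m_a)\wedge\bar c_a$, where $\omega\equiv\sum_a m_a\wedge c_a$ modulo $\bigwedge^2 M$ with $m_a\in M$ and $\bar c_a\in Q_M$ (so only the mixed $M\otimes Q_M$ part contributes to first order), one normalises coordinates at a point of $\Sigma$ to see that this mixed part is a single decomposable wedge $\beta\wedge c$ with $\beta\in M$, $c\in Q_M$. Then $ds_\omega(\phi)=-\phi(\beta)\wedge\bar c$ ranges only in the $2$-dimensional space $Q_M\wedge\bar c\subset\bigwedge^2 Q_M$, so $\operatorname{rank} ds_\omega\leq 2<3$; the Zariski tangent space at $M$ then has dimension $\geq 10>9\geq\dim_M F_\omega$, forcing $M\in\sing(F_\omega)$. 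Hence $\dim\sing(F_\omega)\geq\dim\Sigma=6$.

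It remains to deduce the rank-$4$ case of part (3). Here no dimension contradiction is available, since $\dim F_{\omega_1}=9$; instead I would use the singularity. Both $V$ and $\Sigma$ lie in the $9$-dimensional $F_{\omega_1}$ and are cut out there by the two sections $s_{\omega_2},s_{\omega_3}$ of $\bigwedge^2 T$, so inside $F_{\omega_1}$ the expected dimension of $\Sigma\cap F_{\omega_2}\cap F_{\omega_3}$ is $6-3-3=0$. Granting that this intersection is nonempty, pick $M$ in it: then $ds_{\omega_1}$ has rank $\leq 2$ at $M$ by part (1), so the total differential $ds=(ds_{\omega_1},ds_{\omega_2},ds_{\omega_3})$ has rank $\leq 2+3+3=8<9$, whence $\dim T_M V\geq 4$ and $V$ is singular at $M$. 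I expect the two genuine difficulties to be (a) the differential computation and rank bound in part (1), which is the technical core, and (b) the nonemptiness of $\Sigma\cap F_{\omega_2}\cap F_{\omega_3}$: because $\bigwedge^2 T$ is only globally generated (hence nef) rather than ample on $\G(4,7)$, nefness gives $c_3(\bigwedge^2 T)^2\cap[\Sigma]\geq 0$ but not the strict positivity needed, so this last step must be secured by an explicit Chern-class computation on the $\PP^3$-bundle $\Sigma\to\G(3,4)$ (or, in the spirit of the paper, by a Macaulay 2 calculation).
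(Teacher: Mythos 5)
Your proposal is correct, and structurally it is the same proof as the paper's: both exhibit the six-dimensional Schubert cycle $\Sigma=\{M:\dim(M\cap\Pi)\geq3\}$ (the paper's $B_\omega$) inside $F_\omega$, show the Zariski tangent space of $F_\omega$ jumps to dimension $\geq10$ along it, and then argue in part (3) that this singular locus survives the two remaining cuts. The differences are in execution, and at two points yours is cleaner. For part (2) the paper only exhibits a ten-dimensional family inside $F_\omega$ (graphs of linear maps whose first two columns are proportional), whereas your identification of $F_\omega$ with the Schubert variety $\{\dim(M\cap\Pi)\geq1\}$, of codimension $1\cdot(7-2-4+1)=2$, gives the exact dimension. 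For part (1) you bound $\operatorname{rank}(ds_\omega)\leq2$ intrinsically through the mixed part of $\omega$; the paper obtains the same rank bound extrinsically in Pl\"ucker coordinates, writing $A_\omega=\bigcap_{v\in W_7}A_{\omega,v}$ with $A_{\omega,v}=\{\alpha:\alpha\wedge\omega\wedge v=0\}$ and observing that $T_{[\alpha]}\G$ already lies in $A_{\omega,v}$ for every $v$ in the five-dimensional space $\Pi+M$, so at most two of these hyperplane conditions are nontrivial on the tangent space. (One small caveat in your argument: at the single point $M=\Pi$ of $\Sigma$ the mixed part is zero rather than a decomposable wedge, but there $ds_\omega=0$ and the conclusion is stronger.) As for your difficulty (b): it is a genuine gap, and the paper does not treat it any more carefully --- it simply asserts nonemptiness ``by dimension count'' --- but your proposed remedy does settle it, and in fact no positivity theory is needed, only the observation that a nowhere-vanishing section of the rank-six bundle $(\bigwedge^2T\oplus\bigwedge^2T)|_\Sigma$ would force its top Chern class to vanish. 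That class is nonzero: $c_3(\bigwedge^2T)=c_1c_2-c_3=\sigma_1\sigma_2-\sigma_3=\sigma_{2,1}$, $[\Sigma]=\sigma_{2,2,2}$, and the Littlewood--Richardson expansion $\sigma_{2,1}^2=\sigma_{3,3}+2\sigma_{3,2,1}+\sigma_{3,1,1,1}+\sigma_{2,2,2}+\sigma_{2,2,1,1}$ in $H^{*}(\G(4,7))$ gives
$$\int_{\G(4,7)}\sigma_{2,1}^{2}\cdot\sigma_{2,2,2}=1,$$
the pairing picking out the coefficient of $\sigma_{3,1,1,1}$, the complement of $(2,2,2)$ in the $4\times3$ box. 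Hence $\Sigma\cap F_{\omega_2}\cap F_{\omega_3}\neq\emptyset$, every point of it has $\dim T_MV\geq12-(2+3+3)=4$, and your part (3) closes.
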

\begin{proof}
1.  Let us denote by $A_{\omega}:=\{[\alpha]\in \mathbb{P}(\bigwedge^4 W_7)| \alpha\wedge \omega=0 \}$. Note that 
$F_{\omega}:=A_{\omega}\cap \G(4,W_7)$. It is not hard to check (one can also use Macaulay 2) that $\dim F_{\omega}=9$, which is the expected dimension for the zero locus of a rank 3 vector bundle.
Consider also the Schubert cycle 
$$B_{\omega}:=\{ [U]\in \G(3,W_7)| \dim (U \cap <e_i,e_j,e_k,e_l>)\geq 3\}.$$
Clearly  $B_{\omega}\subset F_{\omega}$ and $B_{\omega}$ is of dimension 6. We claim that $B_{\omega}\subset \mathrm{Sing} F_{\omega}$. Indeed, let $\alpha\in B_{\omega}$  and let $T_{[\alpha]}(\G(3,W_7))$ be the projective tangent space to $G(3,W_7)$ at $\alpha$. Then the projective tangent space to  
$$T_{[\alpha]}(F_{\omega})=T_{[\alpha]}(\G(3,W_7)) \cap A_{\omega}$$
equals
$$<\{[\beta \in \G(3,W_7)| \beta\wedge \omega=0 \text{ and } \dim [\beta]\cap[\alpha] \geq 3 ]\}>.$$
 To prove that $F_{\omega}$ is singular in $[\alpha]$, it is enough to prove that \newline $\dim (T_{[\alpha]}(\G(3,W_7)) \cap
 A_{\omega})\geq 10$.
 For this, let us consider 
$$A_{\omega,v}=\{[\alpha]\in \mathbb{P}(\bigwedge^4 W_7)| \alpha\wedge \omega\wedge v=0 \},$$
for each $v\in W_7$. 
We observe that $T_{[\alpha]}(\G(3,W_7))\subset A_{\omega,v}$ for each\newline $v\in <e_i,e_j,e_k,e_l> + <\alpha>$.
Since $$A_{\omega}=\bigcap_{v\in W_7} A_{\omega,v}$$ it follows that 
 $T_{[\alpha]}(\G(3,W_7)) \cap A_{\omega})$ has codimension at most 2 in \newline $T_{[\alpha]}(\G(3,W_7))$. Hence
$ \dim (T_{[\alpha]}(\G(3,W_7)) \cap A_{\omega})\geq 10$ and $F_{\omega}$ is singular in $[\alpha]$ proving the claim.

2. Take without loss of generality the form $e_{-3}\wedge e_{-2}$. Then clearly the 4-plane $<x_{-3},x_{-2},x_{-1},x_0>$ belongs to $F_{\omega}$ and if we look for $4$-planes in $F_{\om}$ being graphs of linear maps from $<x_{-3},x_{-2},x_{-1},x_{0}>$ to $<x_1,x_2,x_3>$ represented by a matrix
$$   \left[ {\begin{array}{cccc}
   a & b & c & d\\ 
   e & f & g & h\\
   i & j & k & l      \end{array} } \right] $$
it is easy to see that the necessary and sufficient condition is that the first two columns have to be proportional. This yields a 10-dimensional family within $F_{\omega}$.

3. It is easy to see that $F_{\omega_1}\cap F_{\omega_2}\cap F_{\omega_3}$ is a smooth Fano threefold only if these $F_{\omega_j}$ intersect transversally. In the case when one of the forms is of the type $e_i\wedge e_j$ the dimension of the intersection is higher than three. In the case when one of the forms is of the type $e_i\wedge e_j+e_k\wedge e_l$ then dimension count tells us that the dimension of the singular locus of the Fano 3-fold is at least zero i.e. it should be a finite (nonzero) set of points. Thus in this case the constructed space must be singular. 
\end{proof}

On the other hand the weights on the standard $\C^{*}$ action $t: e_j\rightarrow t^je_j$ induce weights $t^{j+k}$ on $e_j\wedge e_k$ and thus forms on which $t$ acts with weight $5,4,-4$ and $-5$ are simply multiples of  monomial forms, whereas forms on which the action has weights $3,2-2$ or $-3$ are spanned by sums of two monomial terms. In turn forms of weight $-1, 0$ or $1$ are spanned by three monomial terms. Thus the only possibility to get a smooth $\C^{*}$-invariant Fano threefold is that the weights are $(-1,0,1)$ and in each case the corresponding basis vector form has to  be of full rank i.e. no coefficient can be equal to zero.

Next, following \cite{D1}, we specify a $\C^{*}$-invariant 3 plane $L$ by making use of a $\mathbb{C}^*$ 
action on $W_7$. By Lemma \ref{slocus of dim 6} in order to obtain a smooth Fano 3-fold, we have to choose $L$ to be spanned by three 2-forms 
 $(\omega_{-1}, \omega_{0}, \omega_{1})$ on which $\mathbb{C}^*$ acts by weights $ (-1, 0, 1)$. 
 As explained in \cite{D1},
 such a triple of $2$ forms can be written as
\begin{align*}\omega_{1} &= u_{01}e_0\wedge e_1+u_{-12}e_{-1}\wedge e_2+ u_{-23}e_{-2}\wedge e_3 \\
   \omega_{0} &= v_{-11}e_{-1}\wedge e_1+v_{-22}e_{-2}\wedge e_2+ v_{-33}e_{-3}\wedge e_3 \\
 \omega_{-1} &= w_{-10}e_{-1}\wedge e_0+w_{-21}e_{-2}\wedge e_1+ w_{-32}e_{-3}\wedge e_2,  
\end{align*}
for some constants $u_{ij},v_{ij},w_{ij}\in \C\setminus\lbrace0\rbrace$. Of course the constants are not uniquely defined, as rescaling the coordinates $x_i$, as well as the whole 2-forms yield the same invariant space $L$.  Exploiting the rescalings of the coordinates $x_i$ by factors $\lambda_i$ given by
\begin{align*}
&\lambda_{-3}=\sqrt{\frac{u_{01}}{w_{-10}w_{-32}^2}\sqrt{\frac{v_{-22}u_{-12}^3}{v_{-11}w_{-21}}}}\\
&\lambda_{-2}=\sqrt{\frac{u_{01}}{w_{-10}}\sqrt{\frac{v_{-11}u_{-12}}{v_{-22}w_{-21}^3}}}\\
&\lambda_{-1}=\sqrt{\frac{u_{01}}{w_{-10}}\sqrt{\frac{v_{-22}}{v_{-11}w_{-21}u_{-12}}}}\\
&\lambda_{-1}=\sqrt{\frac{u_{01}}{w_{-10}}\sqrt{\frac{v_{-22}}{v_{-11}w_{-21}u_{-12}}}}\\
&\lambda_{0}=\sqrt{\frac{1}{u_{01}w_{-10}}\sqrt{\frac{v_{-22}}{v_{-11}w_{-21}u_{-12}}}}\\
&\lambda_{1}=\sqrt{\frac{w_{-10}}{u_{01}}\sqrt{\frac{v_{-22}}{v_{-11}w_{-21}u_{-12}}}}\\
&\lambda_{2}=\sqrt{\frac{w_{-10}}{u_{01}}\sqrt{\frac{v_{-11}w_{-21}}{v_{-22}u_{-12}^3}}}\\
&\lambda_{3}=\sqrt{\frac{w_{-10}}{u_{01}u_{-23}^2}\sqrt{\frac{v_{-22}w_{-21}^3}{v_{-11}u_{-12}}}}
\end{align*}

followed by rescaling of $\omega_0$ by
$$\sqrt{\frac{u_{-12}w_{-21}}{v_{-11}v_{-22}}}$$ 
we end up with the following basis of $L$: 
 \begin{align*}
 \omega^{L}_{1} &= e_0\wedge e_1+e_{-1}\wedge e_2+e_{-2}\wedge e_3 \\
   \omega^{L}_{0} &= e_{-1}\wedge e_1+e_{-2}\wedge e_2+ \tau e_{-3}\wedge e_3 \\
 \omega^{L}_{-1} &= e_{-1}\wedge e_0+e_{-2}\wedge e_1+ e_{-3}\wedge e_2, 
 \end{align*}
where $\tau=\frac{u_{-12}v_{-33}w_{-21}}{u_{-23}v_{-11}w_{-32}}$ (a quantity equivalent to the modulus defined by Donaldson in \cite{D1}). Exactly like in \cite{D1} this $\tau$ parametrizes the family of $3$ planes $L_{\tau}$. In particular we obtain 
 the Mukai-Umemura case for $\tau=1$. It should be emphasized however, that this parametrization is {\it noneffective} i.e. there might be
 different $\tau$'s corresponding to isomorphic manifolds. In particular Tian's choice of coordinates for the Mukai-Umemura example
 \cite{Ti1} yields $\tau=5$.

 Our aim is now to describe the equations defining $V_{12}^{L_{\tau}}\subset \PP(\bigwedge^4 W_7)$.
 
Starting from a one parameter family $L_{\tau}$ as above we describe the linear space $L12\subset \bigwedge^4 W_7$ describing the minimal generators of the Fano threefold $V_{12}^{L_d}= \G(4,7)\cap L12\subset \PP(\bigwedge^4 W_7)$.
This is done by exploiting the following program that runs in Macaulay 2.
In particular \emph{mingens L12} above finds the linear equations describing $L12 \subset \bigwedge^4 W_7$
and \emph{mingens V12} finds the equations defining $V_{12}^{L_{\tau}} \subset \PP(\bigwedge^4 W_7)$ and
 \emph{mingens V12inPL12} finds the equations of $V_{12}^{L_{\tau}} \subset \PP(L12)$.
Note that this generalizes the results obtained in \cite{F} concerning the equations of the Mukai-Umemura threefold. In the following we put $\tau=-d$

\begin{lstlisting}
S=frac (QQ[d])
R=S[x_0..x_6, SkewCommutative=>true]
u=x_0*x_5+x_1*x_4+x_2*x_3
w=-d*x_0*x_6+x_1*x_5+x_2*x_4
v=x_1*x_6+x_2*x_5+x_3*x_4
MING3=mingens ((ideal(vars(R)))^3)
MING4=rsort mingens ((ideal(vars(R)))^4)
NORM34=substitute ((transpose MING4)*(MING3),
{x_0=>1, x_1=>1,x_2=>1,x_3=>1,x_4=>1,x_5=>1,x_6=>1})
MU=matrix {apply(7, i->(coefficients(u*x_i, 
Monomials=>MING3))_1) };
MV=matrix {apply(7, i->(coefficients(v*x_i,
 Monomials=>MING3))_1) };
MW=matrix {apply(7, i->(coefficients(w*x_i,
 Monomials=>MING3))_1) };
RG=S[p0123,p0124,p0134,p0234,p1234,p0125,p0135,
p0235,p1235,p0145,p0245,p1245,p0345,p1345,p2345,
p0126,p0136,p0236,p1236,p0146,p0246,p1246,p0346,
p1346,p2346,p0156,p0256,p1256,p0356,p1356,p2356,
p0456,p1456,p2456,p3456];
 KOND=(map(RG,R))(NORM34*(MU|MV|MW))
 GG=Grassmannian(3,6,RG);
 L12=ideal (vars(RG)*KOND);
 V12=GG+L12;
P13=substitute(V22,{p2356=>-p1456,
p2346=>p0456,p1346=>d*p2345,
p1246=>d*p1345,p0156=>p1236,
p0236=>d*p1235,p0146=>-d*p1235,
p0346=>d*p1345,p0256=>-d*p1345,
p0345=>p1236,p0245=>d*p1235,
p0136=>-d*p1234,p0235=>d*p1234,
p0234=>p0126,p0135=>-p0126,
p0134=>-p0125,p1356=>-p0456,
p0356=>-d*p2345, p0135=>-p0126,
p0145=>-(d+1)*p1234,p1256=>-(d+1)*p2345,
p0246=>d*p1245-p1236})
T=S[p0123,p0124,p0125,p0126,p1456,p0456,p1345,p1236,
p1235,p1234,p2345,p1245,p2456,p3456]
V12inPL12=(map(TT,RG)) P13;
degree V12inPL12
dim V12inPL12
\end{lstlisting}

\section{VSP(C,6)}\label{VSP} Recall that in the previous section a construction of  $V_{12}$
using an appropriate plane quartic was explained. Thus  the ``moduli'' space of  $V_{12}$ is birational to the moduli space of plane quartics.  
We will be interested in the one parameter family parametrized by two tangent conics i.e. 
$$\Gamma_t=\lbrace[a:b:c]\in\PP^2\ |\ (a^2+bc)(ta^2+bc)=0\rbrace.$$
  Below we prove that $\Gamma_t$ parametrize the threefolds $V^a$. 

Let $C$ be the Hilbert scheme of lines on $V_{12}$. 

Suppose that there is a quartic curve $\Gamma$ such that $C$ is the covariant quartic of $\Gamma$. 
Then (see \cite[\S 5]{M1}) we have $V_{12}=VSP(\Gamma, 6)$.
We shall need the following lemma:
\begin{lem}\label{lem-conic}
The covariant quartic of a reducible quartic being the sum of two tangent conics is also a quartic being the sum two tangent conics. The number of tangencies is preserved.
\end{lem}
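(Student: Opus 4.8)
The plan is to exploit the fact that the covariant quartic is an $SL_3$-equivariant polynomial operation on the space of ternary quartics, so that it respects projective equivalence of curves, and then to reduce to explicit normal forms. Recall (following \cite[\S 5]{M1} and the classical invariant theory of ternary quartics) that the assignment $\Gamma\mapsto\mathrm{Cov}(\Gamma)$ sending a quartic to its covariant quartic satisfies $\mathrm{Cov}(g\cdot\Gamma)=(\det g)^{k}\,g\cdot\mathrm{Cov}(\Gamma)$ for every $g\in GL_3(\C)$ and some fixed weight $k$. Since only the zero locus matters, $\mathrm{Cov}$ carries each $PGL_3$-orbit of quartic curves into a single orbit, and in particular it preserves every $PGL_3$-invariant discrete datum of the curve, such as the analytic type and number of its singular points.

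First I would record the invariant characterisation that turns ``number of tangencies'' into a discrete invariant. A product of two smooth conics tangent at two points is exactly a quartic with two tacnodes (two $A_3$ singularities, one at each point of contact), while a product of two conics with a single higher-order contact is a quartic with one $A_7$ singularity; in the model $(c^2-ab)^2+a^4$ one checks directly that the unique singular point $[0:1:0]$ is analytically $w^2=x^8$. The number of tangency points is thus encoded in the singularity type, which is $PGL_3$-invariant, so it suffices to verify the statement on one representative of each orbit.

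Next, using equivariance, I would reduce the two-points case to the one-parameter normal form $\Gamma_t=(a^2+bc)(ta^2+bc)$ and the one-point case to $(c^2-ab)^2+a^4$, and compute $\mathrm{Cov}(\Gamma_t)$ explicitly (most conveniently in Macaulay 2, in the spirit of Section \ref{sec4}). The claim then splits into: (i) $\mathrm{Cov}(\Gamma_t)$ factors again as a product $Q_1'Q_2'$ of two conics, which I would establish by exhibiting the factorisation, or equivalently by checking that the quartic lies on the locus of products of conics (vanishing of the appropriate catalecticant/determinantal condition); and (ii) the two factors meet with the same contact pattern, which I would confirm from the discriminant of the pencil $\det(\lambda Q_1'+\mu Q_2')$ (a double root for each simple tangency). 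The uniform conclusion ``the number of tangencies is preserved'' then follows by a continuity argument: as $t\to 1$ the curve $\Gamma_t$ degenerates its two $A_3$ points into a single $A_7$ point, and since $\mathrm{Cov}$ is continuous and equivariant, the singularities of $\mathrm{Cov}(\Gamma_t)$ must degenerate in the same way, forcing the two-tacnode locus to map to the two-tacnode locus and the $A_7$ case to the $A_7$ case.

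The main obstacle I expect is step (i): pinning down the covariant quartic explicitly enough to prove that $\mathrm{Cov}(\Gamma)$ is genuinely reducible into two conics rather than an arbitrary quartic. Equivariance guarantees that once reducibility is known the tangency data is transported consistently, so the real content lies in the reducibility together with the verification of the contact order; it is the degeneration argument above that I would rely on to upgrade the orbit-by-orbit computation into the uniform statement about the number of tangencies.
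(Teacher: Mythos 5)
Your reduction framework is sound as far as it goes: the covariant quartic of \cite{DK} is indeed a covariant, hence $GL_3$-equivariant up to a power of the determinant; every pair of conics tangent at two points is projectively equivalent to some $\Gamma_t=(a^2+bc)(ta^2+bc)$; and the one-point case is a single orbit with representative $(c^2-ab)^2+a^4$. So it does suffice to compute the covariant on these normal forms --- this is also, implicitly, what makes the paper's proof work. But that computation is precisely the entire content of the lemma, and your proposal never carries it out: you yourself flag step (i), the reducibility of $\mathrm{Cov}(\Gamma_t)$ into two conics with the correct contact, as the main obstacle, and defer it to an unspecified Macaulay~2 run or an unexecuted catalecticant criterion. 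The paper's proof consists exactly of this verification, performed as a direct computation with the polar covariant formulas of \cite[\S 8]{DK}. Without exhibiting the factorisation (keeping $t$ as a parameter) and the contact pattern of the factors, the proposal reduces the lemma to itself on normal forms rather than proving it.

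Moreover, the degeneration argument that you say you would rely on to upgrade the orbit-by-orbit check into the statement that the number of tangencies is preserved is false. In the family $\Gamma_t$ the two tacnodes sit at the two fixed points $[0:1:0]$ and $[0:0:1]$ (the solutions of $a=0$, $bc=0$) for every $t\neq 1$; they never collide. As $t\to 1$ the curve does not acquire an $A_7$ point: it degenerates to the non-reduced double conic $(a^2+bc)^2$, i.e.\ the Mukai--Umemura case, which has no isolated singularities at all. The pair of conics tangent at a single point is not a limit of the curves $\Gamma_t$; it lies on a different boundary stratum of the space of pairs of tangent conics, reached only by letting the two tangency points collide, which cannot happen inside this normal form. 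Fortunately the argument is also unnecessary: once the covariants of $\Gamma_t$ and of $(c^2-ab)^2+a^4$ are actually computed and seen to be, respectively, a pair of conics tangent at two points and a pair tangent at one point, equivariance alone transports each conclusion to every curve in the corresponding orbit, and the clause about the number of tangencies follows from the two computations separately, with no continuity argument needed.
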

\begin{proof}
 This follows from a straitforward computation using \cite[\S 8]{DK}.
\end{proof}

\begin{pro} All Fano threefolds $V^a_{t}$ can be constructed as
$VSP(\Gamma_t,6)$, where $\Gamma_t$ are two tangent conics.
\end{pro}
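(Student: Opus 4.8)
The plan is to use the covariant-quartic characterization recalled above: by \cite[\S 5]{M1}, if the Hilbert scheme of lines $C$ on a genus-$12$ Fano threefold is the covariant quartic of a plane quartic $\Gamma$, then that threefold is $VSP(\Gamma,6)$. So it suffices to compute $C$ for $V^a_t$, to recognize it as a pair of two conics tangent at two points, and then to invert the covariant construction by means of Lemma \ref{lem-conic} in order to identify the VSP quartic with $\Gamma_t$.

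First I would compute the Hilbert scheme of lines explicitly. Starting from the $\G(4,7)$ model of $V^a_t$ with modulus $\tau$ from Subsection \ref{description zero locus of vb}, one parametrizes the lines lying on $V^a_t$ and extracts the equation of their parameter space, which by the general theory of prime Fano threefolds of genus $12$ is a plane quartic $C\subset\PP^2$. The decisive simplification comes from the $\C^{*}$ action: it acts on $V^a_t$ and hence on $C$, so $C$ is a $\C^{*}$-invariant quartic. With the induced weights $(0,1,-1)$ on the coordinates $(a,b,c)$ of $\PP^2$ the only invariant quartic monomials are $a^4$, $a^2bc$ and $b^2c^2$, so every invariant quartic has the form $\alpha a^4+\beta a^2bc+\gamma b^2c^2$; setting $u=a^2,\ v=bc$ this factors as a product of two conics $(bc-r_1a^2)(bc-r_2a^2)$ meeting only along $a=0$, hence tangent at the two points $[0:1:0]$ and $[0:0:1]$. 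The Macaulay~2 computation then pins down the coefficients and shows $C$ is exactly such a pair of conics tangent at two points.

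It remains to pass from $C$ back to the VSP quartic $\Gamma$. By Lemma \ref{lem-conic} the covariant of a pair of two tangent conics is again a pair of two tangent conics with the same number of tangency points; since within the $\C^{*}$-invariant pencil this covariant operation is a dominant self-map of the one-dimensional moduli of such configurations, the quartic whose covariant equals the computed $C$ is itself $\C^{*}$-invariant and is a pair of two conics tangent at two points. Putting it in the normal form $(a^2+bc)(ta^2+bc)$ identifies it with $\Gamma_t$, and \cite[\S 5]{M1} gives $V^a_t=VSP(\Gamma_t,6)$; comparing this one-parameter family with the family of Corollary \ref{smoothV^a} and invoking Mukai's uniqueness of the associated quartic shows that every member of $V^a$ is obtained this way.

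The main obstacle is precisely this inversion together with the correct count of tangency points. Lemma \ref{lem-conic} supplies only the forward implication, so recovering $\Gamma_t$ from $C$ requires knowing that the covariant restricts to an invertible (or at least dominant) correspondence on the moduli of pairs of tangent conics; and one must verify that the contact order produces genuinely two tangency points rather than one, the latter being the degenerate case realized by $V^m$, whose quartic is a pair of conics tangent at a single point. Performing the explicit line count on the $\G(4,7)$ model and checking the contact order at each of $[0:1:0]$ and $[0:0:1]$ is the delicate part; once $C$ is in hand the rest is formal.
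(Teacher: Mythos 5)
Your proposal is correct in substance, and its decisive mechanism is the same as the paper's: both arguments rest on Mukai's characterization from \cite[\S 5]{M1} (if the Hilbert scheme of lines $C$ of $V_{12}$ is the covariant quartic of $\Gamma$, then $V_{12}=VSP(\Gamma,6)$), on the fact that for members of $V^a$ this Hilbert scheme is a pair of conics tangent at two points, and on inverting the covariant operation via Lemma \ref{lem-conic} together with the claim that every pair of tangent conics arises as the covariant of a pair of tangent conics. The differences are in how the key input is obtained and in what is omitted. The paper never computes $C$ for $V^a_t$ directly: it identifies the Prokhorov family $V^a_t$ with Donaldson's family $V^a_\tau$ via \cite[\S 5.3]{D1} and reads off the shape of the Hilbert scheme from there; you instead propose a self-contained equivariant computation on the $\G(4,7)$ model, where $\C^{*}$-invariance with weights $(0,1,-1)$ forces $C$ to have the form $\alpha a^4+\beta a^2bc+\gamma b^2c^2=\gamma(bc-r_1a^2)(bc-r_2a^2)$, hence to be a pair of conics tangent exactly at $[0:1:0]$ and $[0:0:1]$. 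This is very much in the spirit of the paper's own Section \ref{sec4} script, and it in fact resolves the worry you raise at the end: a pair of conics tangent at a single point (the $V^m$ configuration, invariant only under a unipotent group) can never be invariant under a nontrivial torus with these weights, so the two-point tangency comes for free once the weights are verified. What you drop is the paper's first half, namely that the $\C^{*}$-automorphisms of $\Gamma_t$ induce \emph{nontrivial} automorphisms of $VSP(\Gamma_t,6)$, which by Theorem \ref{P} places every smooth $VSP(\Gamma_t,6)$ inside $V^a$; this is not needed for the direction you prove, so its omission is not a gap, though it is what upgrades the statement to a genuine two-way correspondence between the one-parameter family of Corollary \ref{smoothV^a} and the family of quartics $\Gamma_t$. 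Finally, the step you single out as the main obstacle, that Lemma \ref{lem-conic} only gives the forward implication, is treated in the paper by exactly the assertion you make (surjectivity of the covariant on pairs of tangent conics), so your proof is no less complete than the published one on this point; what you still owe, beyond that, is the verification of the induced weights $(0,1,-1)$ on the plane of the Hilbert scheme.
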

\begin{proof} First observe that the quartics $\Gamma_t$ admit $\C^*$ as a subgroup
of the automorphism group. Indeed
\begin{equation}
 \C^{*}\ni\lambda\mapsto\theta_{\lambda}\colon (a,b,c)\to (a,\lambda b,\lambda^{-1}c)
\end{equation}
fixes $\Gamma_t$.

The second step is to show that the above automorphisms of $\Gamma_t$ 
induce an one parameter family of automorphisms of
$VSP(\Gamma_t,6)$. Indeed, if $\theta$ is such an automorphism and
$$(a^2+bc)(ta^2+bc)=l_1^4+\dots+l^4_{6},$$ then $(a^2+bc)(ta^2+bc)=l_1^4(\theta)+\dots+l_6^4(\theta)$, where $l_j(\theta)$ denotes the action of $\theta$ on the linear form $l_j$.
We claim that the induced automorphism is not trivial for $\theta\neq 1$.
Indeed, suppose that it is trivial for a generic $\lambda$ (by Theorem \ref{P} it suffices to check for generic $\lambda$). Then $l_1(\theta_{\lambda})=e l_i$ for some $1\leq i\leq 5$ and some $e\in \C$. It follows that if $l_1=xa+yb+zc$ then two of the numbers $x,y,z$ are zero (the same holds for $l_2,\dots,l_5$).
However, we see that $(a^2+bc)(ta^2+bc)$ is not a linear combination of $a^4,b^4,c^4$ the claim follows.
\begin{rqe} A more tedious analysis of the various cases easily shows that the action is nontrivial unless $\lambda^2=1,\lambda^3=1$ or $\lambda^6=1$. Each of these cases can be ruled out by hand exploiting careful computations.  
\end{rqe}

 We know from \cite{P1} that the manifolds $V_t^a$ form a one parameter family, like the
family of two tangent conics.
In order to conclude we should show that the threefolds $VSP(\Gamma_t,6)$ are smooth.
It follows from \cite[\S 5.3]{D1} that the one parameter family $V^a_t$ is the same as the one parameter family $V_{\tau}^a$ parameterized by $\tau$ in \cite{D1}. Thus the Hilbert scheme of lines on $V_{\tau}^a$ are two tangent conics. 
We conclude with Lemma \ref{lem-conic} since each pair of conics can be obtained as a covariant quartic of a pair of conics.
 \end{proof}
The two components of the quartic above parameterize two families of lines of $V^a_t$.
\begin{cor}
 There are two divisors spanned by lines on a generic Fano threefold $V^a_t$.
\end{cor}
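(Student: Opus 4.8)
The plan is to interpret each of the two conic components of the Hilbert scheme of lines as a one-parameter family and to show that each such family sweeps out a surface. By the preceding Proposition together with Lemma \ref{lem-conic}, the Hilbert scheme $C$ of lines on a generic $V:=V^a_t$ is a pair of conics tangent at two points, say $C=C_1\cup C_2$; and, as noted just above, each component $C_i$ parametrizes a one-parameter family of lines on $V$. Since $C$ is (isomorphic to) the Hilbert scheme of lines, distinct points of $C_i$ correspond to distinct lines.

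For each $i$ I would form the incidence variety
$$I_i:=\{(x,[L])\in V\times C_i\ :\ x\in L\},$$
with projections $q_i\colon I_i\to C_i$ and $p_i\colon I_i\to V$. The fibre of $q_i$ over $[L]$ is the line $L\cong\mathbb{P}^1$, so $q_i$ makes $I_i$ a $\mathbb{P}^1$-bundle over the curve $C_i$ and $\dim I_i=2$. The object of interest is the image
$$D_i:=p_i(I_i)=\bigcup_{[L]\in C_i}L,$$
and the claim is that $D_i$ is a surface, hence an effective divisor on $V$. As $\dim I_i=2<3=\dim V$, the image is automatically at most two-dimensional, so the whole content is to exclude that $p_i$ contracts $I_i$ onto a curve.

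I expect this nondegeneracy to be the only genuine obstacle, but it is mild given that $C_i$ sits inside the Hilbert scheme. If the image $B:=p_i(I_i)$ were a curve, then each line $L$ with $[L]\in C_i$, being one-dimensional and contained in $B$, would be an irreducible component of $B$; as $B$ has only finitely many components, only finitely many distinct lines could occur, contradicting that the one-dimensional family $C_i$ parametrizes pairwise distinct lines. Note that concurrency of the lines is not an issue here, since a one-parameter family of lines through a common point already sweeps out a (cone) surface. Hence $p_i$ is generically finite onto its image, $\dim D_i=2$, and $D_i$ is a divisor.

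It remains to see that the two divisors are distinct. The conics $C_1$ and $C_2$ meet only in their two points of tangency, so a general point of $C_1$ parametrizes a line that does not belong to the family $C_2$; consequently a general line in $D_1$ is not one of the lines sweeping out $D_2$, and $D_1\neq D_2$. This produces the two divisors spanned by lines asserted in the statement. Smoothness of $V^a_t$ for generic $t$, which guarantees that $C$ is an honest pair of conics tangent at two points rather than the degenerate double-conic (Mukai-Umemura) case, is Corollary \ref{smoothV^a}.
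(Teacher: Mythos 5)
Your incidence-variety argument is, in its first two steps, a correct and rigorous rendering of what the paper treats as immediate: the paper deduces the corollary from the preceding Proposition with no argument beyond the sentence that the two components of the quartic parametrize two families of lines. Your reduction to Lemma \ref{lem-conic} and the Proposition is the same route, and the sweeping-out step (if the image of $I_i$ were a curve $B$, each of the infinitely many pairwise distinct lines would be an irreducible component of $B$) is sound, as is your remark that concurrency is harmless.

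The genuine gap is the final step, where you conclude $D_1\neq D_2$. The inference ``a general line of the family $C_1$ is not a member of the family $C_2$, hence $D_1\neq D_2$'' is invalid: a single irreducible surface can carry two distinct one-parameter families of lines --- the smooth quadric in $\mathbb{P}^3$ with its two rulings is the standard example --- so two different families of lines may perfectly well sweep out the same surface. Since the statement asserts the existence of \emph{two} divisors spanned by lines, distinctness is part of the content and must be argued. It can be patched as follows. Each $D_i$ is irreducible, being the image of the $\mathbb{P}^1$-bundle $I_i$ over the irreducible conic $C_i$; the two families share only the finitely many lines parametrized by $C_1\cap C_2$ (the two tangency points), so if $D_1=D_2=:D$, then through a general point of $D$ there would pass two distinct lines contained in $D$. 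Classically, an irreducible surface in projective space with two distinct lines through a general point is a plane or a quadric, hence of degree at most $2$. But $V$ is a prime Fano threefold, $\Pic(V)=\mathbb{Z}[-K_V]$ with $(-K_V)^3=22$, so every irreducible surface in $V$ lies in $|-mK_V|$ for some $m\geq 1$ and has degree at least $22$ in the anticanonical embedding $V\subset\mathbb{P}^{13}$ --- a contradiction. With this (or any equivalent) distinctness argument supplied, your proof is complete and matches the paper's intended reasoning.
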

Note that from \cite{M} the intersection of the conics parameterizing the Hilbert scheme of lines 
corresponds to lines with special normal bundle $(1,-2)$.
From now on we will identify the families $V^a$, $V^a_t$, $V^{a}_{\tau}$\newline  and $VSP(\Gamma_t,6)$.

We finish this section by analyzing the manifold $V^m$. In order to find a $\C^{+}$-invariant $3$-plane in $\bigwedge \C^7$, we consider the standard $\C^{+}$ action on one forms given by the matrix

\begin{equation}
\C^{+}\ni t\rightarrow\left[\begin{array}{ccccccc}
1&6t&15t^2&20t^3&15t^4&6t^5&t^6\\
0&1&5t&10t^2&10t^3&5t^4&t^5\\
0&0&1&4t&6t^2&4t^3&t^4\\
0&0&0&1&3t&3t^2&t^3\\
0&0&0&0&1&2t&t^2\\
0&0&0&0&0&1&t\\
0&0&0&0&0&0&1
\end{array}\right]
\end{equation}
By direct computation the following $3$-forms yield a $\C^{+}$-invariant 3-plane

\begin{align*}
&\omega_1=e_1\wedge e_6-5e_2\wedge e_5+10e_3\wedge e_4+e_5\wedge e_6\\
&\omega_2= e_0\wedge e_6-4e_1\wedge e_5+5e_2\wedge e_4+(2e_3\wedge e_6-6e_4\wedge e_5)\\
&+e_4\wedge e_6+2e_5\wedge e_6\\
&\omega_3= e_0\wedge e_5-5e_1\wedge e_4+10e_2\wedge e_3+(e_2\wedge e_6-2e_3\wedge e_5)\\
&+(e_3\wedge e_6-2e_4\wedge e_5)
 +e_4\wedge e_6+ e_5\wedge e_6.
\end{align*}
The corresponding variety turns out to be a smooth Fano threefold. This example is an element of Tian's family of deformations (see \cite{Ti}). The corresponding plane quartic in the VSP construction can be computed in Macaulay 2: it reads
$$P_4(a,b,c)=32a^4  + 16a^2 b^2  - 8abc^2  + c^4=(c^2-4ab)^2+32a^4.$$

It is obvious that this $P_4$ is a deformation of the double conic and it is a sum of two conics intersecting at one point.

\section{Special divisors} Similarly as in \cite{F} our  goal is to describe the singularities of invariant divisors on $V_{12}^{L_d}$. Let us first describe divisors in the system $|-K_{V_{12}^{L_m}}|$ which are invariant under
the $\mathbb{C}^*$ action. Recall that $V_{12}^{L_d}$ is obtained as the intersection $\G(4,7)\cap \mathbb{P}(L)$, where 
$\mathbb{P}(L)\simeq \mathbb{P}^{13}$. Observe now that each invariant divisor in  $|-K_{V_{12}^{L_d}}|$ can be obtained as 
the intersection in $\mathbb{P}(\bigwedge^4 W_7)$ of  $\G(4,7)\cap \mathbb{P}(L)$ with an invariant hyperplane in 
$\mathbb{P}(\bigwedge^4 W_7)$. The latter hyperplanes correspond to fixed points of the action of $\mathbb{C}^*$ on  
$\mathbb{P}(\bigwedge^3 W_7)$. The set of fixed points of the latter action is a sum of 13 linear spaces $P_k$, where for $k\in\{-6,\dots 6\}$
the space $P_k$ consists of all 3-forms on which $\mathbb{C}^*$ acts with weight $k$. One checks easily that restrictions of the corresponding 
hyperplanes to $\mathbb{P}(L)$ give rise to 12 fixed divisors corresponding to $k\neq 0$ and a line of fixed divisors for $k=0$.
\begin{pro} The set of divisors fixed by the $\mathbb{C}^*$ action consists of 12 points and a line. 
\end{pro}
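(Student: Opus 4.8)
The plan is to reduce the statement to a linear-algebra weight count on the $14$-dimensional span $L$ (the space \emph{L12} of the previous section, so that $\mathbb{P}(L)\cong\mathbb{P}^{13}$ is the anticanonical ambient space), using the observation—recalled just above—that $\C^{*}$-invariant anticanonical divisors correspond to $\C^{*}$-fixed points of the dual projective space.

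First I would record the correspondence precisely. Since $V_{12}^{L_d}\subset\mathbb{P}(L)$ is nondegenerate and $|-K|$ is the restriction of $\mathcal{O}(1)$, the divisors in $|-K_{V_{12}^{L_d}}|$ are in bijection with the hyperplanes of $\mathbb{P}(L)$, i.e. with the points of $\mathbb{P}(L^{*})$; distinct hyperplanes cut distinct divisors because $V_{12}^{L_d}$ spans $\mathbb{P}(L)$. A divisor is $\C^{*}$-invariant exactly when its defining linear form is an eigenvector of the induced action on $L^{*}$. As this action is diagonalizable, $L^{*}=\bigoplus_{k}(L^{*})_{k}$ splits into weight spaces and the fixed-point locus in $\mathbb{P}(L^{*})$ is the disjoint union $\bigsqcup_{k}\mathbb{P}\big((L^{*})_{k}\big)$. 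Thus a weight with $\dim(L^{*})_{k}=1$ contributes a single fixed divisor (a point), whereas a weight with $\dim(L^{*})_{k}=2$ contributes a $\mathbb{P}^{1}$, i.e. a line of fixed divisors. This is the same dictionary as in the paragraph preceding the statement, there phrased through the fixed loci $\mathbb{P}(P_{k})\subset\mathbb{P}(\bigwedge^{3}W_{7})$ and their images under the $\C^{*}$-equivariant surjection $\bigwedge^{3}W_{7}\twoheadrightarrow L^{*}$ dual to $L\hookrightarrow\bigwedge^{4}W_{7}$.

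Second, and this is where the content lies, I would compute the weight multiplicities of $L$. The torus acts by $t\cdot e_{j}=t^{j}e_{j}$ with indices $-3,\dots,3$, so a Plücker monomial $e_{i}\wedge e_{j}\wedge e_{k}\wedge e_{l}$ has weight $i+j+k+l$, ranging over $\{-6,\dots,6\}$. Reading off from the Macaulay~2 presentation the $14$ surviving Plücker coordinates that furnish a weight basis of $L$, one finds exactly one coordinate of each nonzero weight and two coordinates of weight $0$. Hence $\dim(L^{*})_{k}=\dim L_{-k}=1$ for the twelve nonzero weights and $\dim(L^{*})_{0}=\dim L_{0}=2$.

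Combining the two steps yields twelve fixed points, one for each nonzero weight, together with a single line $\mathbb{P}\big((L^{*})_{0}\big)\cong\mathbb{P}^{1}$ coming from the weight-zero space, which is exactly the assertion. I expect the only genuine obstacle to be step two: one must certify that weight $0$ occurs with multiplicity precisely two—this is what produces the line rather than isolated points—and that every weight in $\{-6,\dots,6\}$ is actually attained, so that no weight space is killed by the projection to $L^{*}$. Both facts follow from the explicit generators $\omega^{L}_{-1},\omega^{L}_{0},\omega^{L}_{1}$ (equivalently from the Macaulay~2 output), but they are the crux of the argument and cannot be read off from abstract representation theory alone.
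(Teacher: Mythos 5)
Your proposal is correct and takes essentially the same route as the paper: both identify invariant anticanonical divisors with $\mathbb{C}^{*}$-fixed points of the dual projective space and reduce the count to the weight decomposition of the $14$-dimensional space cut out by the Pl\"ucker coordinates $p_{0123},p_{0124},\dots,p_{3456}$, whose weights are $\pm1,\dots,\pm6$ each with multiplicity one and $0$ with multiplicity two (the latter giving the line, spanned by $p_{1236}$ and $p_{1245}$). The only difference is cosmetic---you work directly in $\mathbb{P}(L^{*})$ rather than in $\mathbb{P}(\bigwedge^{3}W_{7})$ followed by restriction, and you spell out the multiplicity count that the paper dismisses with ``one checks easily.''
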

\begin{rqe} On the other hand we have the following invariant divisors
on $V^a$:
First recall from \cite{F1} that on a generic $V^a$ we have two special lines $l_1,l_2$ having the
normal bundle $(-1,2)$.
\begin{itemize}
\item the divisors spread by lines on $V^a$: we have two components each
belonging to $|-K_{V_{12}^a}|$. There are two families of lines;
\item the two divisors spread by conics on $V^a$ cutting the line $l_i$ for
$i=1,2$. We denote these divisors by $C^1, C^2$.
 \end{itemize}
\end{rqe}

We denote by $p_{ijkl}$ for $0\leq i,j,k,l\leq 6$ the natural coordinates of $\bigwedge^4 W_7$. 
Our aim is to understand the singularities of the invariant divisors. To this end we compute some affine parts (containing the most singular points on these divisors) of the twelve invariant divisors and the one parameter family on $V_{12}^{L_{d}}$.
These affine parts can be described by an elimination of variables as hypersurfaces with the following equations:
\begin{enumerate}
\item the divisor $ p_{0123} = 0$ \\ with equation $0=p_{1456}^5 p_{2456}^2+2p_{1356}p_{1456}^3 p_{2456}^3+p_{1356}^2 p_{1456} p_{2456}^4-1/(d+1)p_{1456}^6+(
      2d^3+2d^2+d-2)/(d+1)p_{1356}p_{1456}^4p_{2456}+(2d^3-1)/(d+1)p_{1356}^2p_{1456}^2p_{2456}^2+(-
      2d^2-d)/(d+1)p_{1356}^3p_{2456}^3+(d^4+d^2+2d)p_{1356}^2p_{1456}^3+(-2d^3+2d)p_{1356}^     
      3p_{1456}p_{2456}+(-d^4-2d^3-d^2)p_{1356}^4 $
\item the divisor $p_{0124}=0$ \\with equation $0= p_{1456}^4p_{2456}^3+2p_{1356}p_{1456}^2p_{2456}^4+p_{1356}^2p_{2456}^5\newline +(d-1)p_{1456}^5p_{2456}+(
      2d^2+4d-2)p_{1356}p_{1456}^3p_{2456}^2\newline 
+(2d^2+3d-1)p_{1356}^2p_{1456}p_{2456}^3+(d^3-d^2-d)
      p_{1356}p_{1456}^4+(d^4+4d^3+d^2)p_{1356}^2p_{1456}^2p_{2456}+(d^3+d^2+d)p_{1356}^3p_{2456}^2+(d^5+
      3d^4+3d^3+d^2)p_{1356}^3p_{1456} $
  \item the divisor $p_{0125}=0$ \\with equation
$0=p_{1456}^4p_{2456}^2+2p_{1356}p_{1456}^2p_{2456}^3+p_{1356}^2p_{2456}^4-p_{1456}^5+(2d^2+2d-2)
      p_{1356}p_{1456}^3p_{2456}+(2d^2+2d-1)p_{1356}^2p_{1456}p_{2456}^2+(d^4+2d^3+2d^2+d)p_{1356}^2p_{1456}
      ^2+(d^2+d)p_{1356}^3p_{2456} $
\item the divisor $p_{0135}=0$ \\with equation $0=p_{1456}^4p_{2456}+2p_{1356}p_{1456}^2p_{2456}^2+p_{1356}^2p_{2456}^3\newline +(d^2-1)p_{1356}p_{1456}^3+(d^2-
      1)p_{1356}^2p_{1456}p_{2456}+(d^3+2d^2+d)p_{1356}^3 $
\item  the divisor $p_{0136}=0$ \\with equation $0= p_{0125}^2 + p_{0124}p_{0135} $ 
\item the divisor $ p_{0146}=0$ \\with equation $0=p_{1456}^3 p_{2456}+p_{1356}p_{1456}p_{2456}^2+d^2p_{1356}p_{1456}^2\newline -dp_{1356}^2p_{2456} $
\item the divisor $p_{0256}=0$ \\with equation $0 =p_{1456}^2 p_{2456}+p_{1356} p_{2456}^2+(d^2+d)p_{1356} p_{1456} $
\item the divisor $p_{0356}=0$ \\with equation $p_{1456}^2+p_{1356}p_{2456} $

\item the divisor $p_{1356} =0$ \\smooth in the affine part $p_{3456}=1$ for $d\neq -1$
\item the divisor $ p_{1456} =0$ \\smooth in the affine part $p_{3456}=1$ for $d\neq -1 $

\item the divisor $ p_{2456}=0$ \\smooth in the affine part $p_{3456}=1$ for $d\neq -1$

\item the divisor $ p_{3456} =0$\\ with equation $0= p_{0124}^2p_{0125}^5+2p_{0124}^3p_{0125}^3p_{0135}+p_{0124}^4p_{0125}p_{0135}^2\newline -1/(d+1)p_{0125}^6+(
     2d^3+2d^2+d-2)/(d+1)p_{0124}p_{0125}^4p_{0135}\newline +(2d^3-1)/(d+1)p_{0124}^2p_{0125}^2p_{0135}^2+(- 
      2d^2-d)/(d+1)p_{0124}^3p_{0135}^3\newline +(d^4+d^2+2d)p_{0125}^3p_{0135}^2+(-2d^3+2d)
      p_{0124}p_{0125}p_{0135}^3\newline +(-d^4-2d^3-d^2)p_{0135}^4 $

\item one parameter family of invariant divisors parameterized by \newline $b$: $p_{0156}=b*p_{0246}$ with equations $0=p_{0124}^2p_{0125}^2+p_{0124}^3p_{0135}+1/bp_{0125}^3+(d^2b+db+1)/b*p_{0124}p_{0125}p_{0135}+(d^3b+d^2b-d^2-d)/b*p_{0135}^2 $
\end{enumerate}
It is interesting that all these divisors are weighted homogeneous. The next proposition  summarizes what Koll\'ar's inequality \ref{Kollar} yields for each of them:
\begin{pro}\begin{enumerate}
\item  The first divisor is weighted homogenous with \\ weights on $p_{1456},p_{2456},p_{1356}$ given by
 $(3,2,1)$,respectively. Then the polynomial becomes homogenous of degree $12$. In particular \newline
$lct(D_1)\leq \frac{2+3+1}{12}=\frac12$;
\item The second divisor is weighted homogenous with weights on \\$p_{1456},p_{2456},p_{1356}$ given by
 $(3,2,1)$,respectively. Then the polynomial becomes homogenous of degree $11$. In particular \newline
$lct(D_2)\leq \frac{2+3+1}{11}=\frac{6}{11}$;
\item The third divisor is weighted homogenous with weights on \\$p_{1456},p_{2456},p_{1356}$ given by
 $(3,2,1)$,respectively. Then the polynomial becomes homogenous of degree $10$. In particular \newline
$lct(D_3)\leq \frac{2+3+1}{10}=\frac{3}{4}$;
\item The fourth divisor is weighted homogenous with weights on \\$p_{1456},p_{2456},p_{1356}$ given by
 $(3,2,1)$,respectively. Then the polynomial becomes homogenous of degree $9$. In particular 
$lct(D_4)\leq \frac{2+3+1}{9}=\frac23$;
\item  The fifth divisor is homogenous. The polynomial is of degree $2$. In particular 
$lct(D_5)\leq \frac{1+1+1}{2}=\frac32$. Of course in this case we get a nodal singularity and much better estimate can be given by different neans;
\item The sixth divisor is weighted homogenous with weights on \\$p_{1456},p_{2456},p_{1356}$ given by
 $(3,2,1)$,respectively. Then the polynomial becomes homogenous of degree $7$. In particular 
$lct(D_6)\leq \frac{2+3+1}{7}=\frac67$;
\item The seventh divisor is weighted homogenous with weights on \\$p_{1356},p_{1456},p_{2456}$ given by
 $(3,2,1)$,respectively. Then the polynomial becomes homogenous of degree $5$. In particular 
$lct(D_7)\leq \frac{2+3+1}{5}=\frac65$;
\item The eighth divisor is homogenous. The polynomial is of degree $2$. In particular 
$lct(D_8)\leq \frac{1+1+1}{2}=\frac32$. As in the fifth case we get a nodal singularity; 
\item The twelfth divisor is weighted homogenous with weights on \\$p_{0124},p_{0125},p_{0135}$ given by
 $(1,2,3)$,respectively. Then the \\ polynomial becomes homogenous of degree $12$. In particular \\
$lct(D_{12})\leq \frac{2+3+1}{12}=\frac12$;
\item The thirteenth family of divisors are weighted homogenous with \\ weights on $p_{0124},p_{0125},p_{0135}$ given by
 $(1,2,3)$,respectively. Then the polynomial becomes homogenous of degree $6$. In particular \\
$lct(D_{12})\leq \frac{2+3+1}{6}=1$;
           \end{enumerate} 
\end{pro}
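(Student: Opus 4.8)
The plan is to reduce the whole proposition to a divisor-by-divisor application of Koll\'ar's inequality (Theorem~\ref{Kollar}), the essential point being that each of the affine equations listed above is \emph{weighted homogeneous}. For a weighted homogeneous $f$ every monomial occurring in its expansion carries the same weighted degree $\deg_w f$, so the quantity $w(f)$ in Theorem~\ref{Kollar} --- the least weight among the monomials --- is simply $\deg_w f$. Koll\'ar's bound therefore collapses to
$$ lct(D_i)\leq \frac{\sum_j w(p_j)}{\deg_w f_i}, $$
and the proof becomes the mechanical task of producing, for each $i$, a weight assignment rendering $f_i$ homogeneous and then reading off the degree.

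For each divisor the first step is to \emph{determine} the weights rather than postulate them. I would select two or three monomials of $f_i$ with independent exponent vectors, equate their weighted degrees, and solve the resulting small linear system. For $D_1$, comparing $p_{1456}^6$ with $p_{1456}^5 p_{2456}^2$ forces $w(p_{1456})=2\,w(p_{2456})$, and comparing $p_{1456}^6$ with $p_{1356}^4$ forces $3\,w(p_{1456})=2\,w(p_{1356})$; normalizing to coprime integers pins the triple down (for $D_1$ as $w(p_{1456})=2$, $w(p_{2456})=1$, $w(p_{1356})=3$) with common weighted degree $12$. The second step is the verification that \emph{every} remaining monomial carries that same weighted degree --- a one-line check per monomial, and the part that actually certifies weighted homogeneity. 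Running the same procedure through $D_2,\dots,D_8$ and $D_{12},D_{13}$ yields the weighted degrees $11,10,9,2,7,5,2$ and $12,6$.

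The third and last step is purely arithmetic. In every genuinely weighted case the weights form a permutation of $(1,2,3)$, so $\sum_j w(p_j)=6$ and $lct(D_i)\leq 6/\deg_w f_i$; this gives $\tfrac{6}{12},\tfrac{6}{11},\tfrac{6}{10},\tfrac{6}{9},\tfrac67,\tfrac65$ for $D_1,D_2,D_3,D_4,D_6,D_7$ and $\tfrac{6}{12},\tfrac66$ for $D_{12},D_{13}$, i.e. $\tfrac12,\tfrac{6}{11},\tfrac35,\tfrac23,\tfrac67,\tfrac65$ and $\tfrac12,1$ after reduction. The two quadratic divisors $D_5$ and $D_8$, with equations $p_{0125}^2+p_{0124}p_{0135}$ and $p_{1456}^2+p_{1356}p_{2456}$, are nondegenerate, hence ordinary homogeneous of degree $2$ with weights $(1,1,1)$ and $\sum_j w(p_j)=3$, giving $lct\leq\tfrac32$; since the singularity is an ordinary node, a sharper value is of course available by other means.

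There is essentially no conceptual obstacle here --- the argument is bookkeeping --- so the only points demanding care are two. First, which exponent receives which weight genuinely depends on the divisor (the multiset of weights is $\{1,2,3\}$ throughout, which is why every weighted bound has numerator $6$, but the assignment to $p_{1456},p_{2456},p_{1356}$ must be read off from the monomial check rather than guessed, and one should likewise reduce $6/10=\tfrac35$). Second, and this is the genuine obstacle, the estimate produced is the log canonical threshold computed at the origin of the chosen affine chart, so for it to control the global invariant one must know that each chart was selected so as to contain the most singular point of $D_i$; this is precisely the role of the eliminations of variables that produced the equations above, and it is the single place where the argument leans on --- and should be cross-checked against --- the Macaulay~2 computation.
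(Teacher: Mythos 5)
Your proposal is correct and follows exactly the paper's (implicit) proof: the proposition is nothing but Koll\'ar's inequality (Theorem \ref{Kollar}) applied chart by chart to the listed weighted-homogeneous equations, giving $lct(D_i)\leq \sum_j w(p_j)/\deg_w f_i=6/\deg_w f_i$ in each genuinely weighted case and $3/2$ for the two quadrics. If anything your write-up is more careful than the printed statement: the weights you derive for $D_1,\dots,D_4,D_6$ are $(2,1,3)$ on $(p_{1456},p_{2456},p_{1356})$ (the paper's ordering with weights $(3,2,1)$ does not actually make those polynomials homogeneous and is evidently a transcription slip, as item (7) shows the intended assignment), and your reduction $6/10=\frac{3}{5}$ in item (3) corrects the paper's arithmetic $\frac{3}{4}$, yielding a stronger and correct bound; your closing caveat about the charts containing the most singular points is, moreover, only needed for sharpness, not for the validity of the upper bounds, since the global threshold is an infimum over local ones.
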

\begin{rqe}
 In \cite{F} part of this computation was performed for the Mukai-Umemura threefold. In particular the first and the twelfth divisors
 were
 computed explicitly.
\end{rqe}

An immediate consequence is the following upper bound for the log canonical threshold:
\begin{pro} For any $V\in V^a$ the log canonical threshold satisfies 
$$lct(V,\C^{*})\leq \frac{1}{2}.$$
\end{pro}

\section{The group of automorphisms of elements of $V^a$ and the openness of K\"ahler-Einstein examples}

The aim of this section is to study more precisely the automorphism group of elements $V\in V^{a}$ and to use the symmetries in the K\"ahler-Einstein deformation problem.

 \begin{pro} Every Fano threefold $V^a_{12}$ admits an additional
automorphism being an involution $\iota$. Furthermore if $\theta_{\lambda},\ \lambda\in\C^{*}$ denotes an element of the $\C^{*}$ action then we have the identity $\iota\circ\theta_{\lambda}=\theta_{\lambda}^{-1}\circ\iota$ i.e. the involution anti-commutes with the action.
 \end{pro}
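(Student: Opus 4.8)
The plan is to exhibit $\iota$ explicitly as the automorphism induced by the linear involution of $W_7$ that reverses the grading of the $\C^*$ action. Recall from the $\G(4,7)$ model that $V=V_{12}^{L_\tau}$ is cut out inside $\G(4,7)\subset\PP(\bigwedge^4 W_7)$ by the incidence conditions $\alpha\wedge\omega=0$, $\omega\in L_\tau$, and that the three plane $L_\tau$ is spanned by the normalized forms $\omega^L_1,\omega^L_0,\omega^L_{-1}$ displayed above, on which $\C^*$ acts with weights $+1,0,-1$. The first step is to introduce the linear map $\sigma\colon W_7\to W_7$, $\sigma(e_j)=e_{-j}$ for $j=-3,\dots,3$. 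Clearly $\sigma^2=\mathrm{id}$, and both $\sigma\circ\theta_\lambda$ and $\theta_\lambda^{-1}\circ\sigma$ send $e_j\mapsto\lambda^j e_{-j}$, so the anti-commutation relation $\sigma\circ\theta_\lambda=\theta_\lambda^{-1}\circ\sigma$ already holds at the level of $W_7$.

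Second, I would check that $\sigma$ preserves the plane $L_\tau$. Extending $\sigma$ to $\bigwedge^2 W_7$ and using $e_a\wedge e_b=-e_b\wedge e_a$, a direct substitution gives $\sigma(\omega^L_1)=-\omega^L_{-1}$, $\sigma(\omega^L_{-1})=-\omega^L_1$ and $\sigma(\omega^L_0)=-\omega^L_0$; the crucial point is that the coefficient $\tau$ sits only on the grading-fixed term $e_{-3}\wedge e_3$ of $\omega^L_0$, so it is preserved for every value of $\tau$ and the computation is uniform across the whole family $V^a$. Hence $\sigma(L_\tau)=L_\tau$, with $\sigma$ swapping the weight $\pm1$ lines and fixing the weight $0$ line --- exactly the behaviour of the Weyl element in the Mukai-Umemura ($SL(2)$) case of Theorem \ref{P}, of which only this discrete symmetry survives for general $\tau$.

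Third, I would transport $\sigma$ to $V$. Since $\bigwedge^4\sigma$ preserves $\G(4,7)$ and, for any $g\in GL(W_7)$, one has $(g\alpha)\wedge(g\omega)=g(\alpha\wedge\omega)$, the incidence conditions defining $V$ are preserved by $\sigma$ precisely because $\sigma(L_\tau)=L_\tau$. Thus $\bigwedge^4\sigma$ restricts to a projective automorphism $\iota$ of $V$, which is an involution since $\sigma^2=\mathrm{id}$ forces $(\bigwedge^4\sigma)^2=\bigwedge^4(\sigma^2)=\mathrm{id}$. It is nontrivial, since $\iota=\mathrm{id}$ together with the anti-commutation would force $\theta_\lambda=\theta_\lambda^{-1}$ for all $\lambda$. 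To match the description of $\mathrm{Aut}(V)$ by elements of $SL(7,\C)$ recalled above, one may rescale $\sigma$ by a seventh root of $-1$ (its determinant being $-1$); this does not change the induced map on $\PP(\bigwedge^4 W_7)$. Finally, applying $\bigwedge^4$ to $\sigma\circ\theta_\lambda=\theta_\lambda^{-1}\circ\sigma$ and restricting to $V$ yields $\iota\circ\theta_\lambda=\theta_\lambda^{-1}\circ\iota$.

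The only genuinely computational step is the verification that $\sigma$ preserves $L_\tau$, and once the normalized forms are in hand this is an immediate substitution, so I do not expect a real obstacle there. The single point demanding a little care is the passage from the linear involution on $W_7$ to an honest involution of $V$ inside projective space, i.e. the determinant and scalar bookkeeping needed to realize $\iota$ as an element of $SL(7,\C)$; this is harmless because only the class in $PGL(W_7)$ acts on $\PP(\bigwedge^4 W_7)$.
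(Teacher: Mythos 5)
Your proof is correct, but it takes a genuinely different route from the paper's. The paper argues entirely in the VSP model: having identified $V^a$ with $VSP(\Gamma_t,6)$ for $\Gamma_t=\lbrace(a^2+bc)(ta^2+bc)=0\rbrace$, it observes that the plane involution $[a:b:c]\mapsto[a:c:b]$ preserves $\Gamma_t$ (exchanging the two tangency points), hence induces an automorphism of $VSP(\Gamma_t,6)$, and the anti-commutation is immediate because the $\C^*$ action there is $(a,b,c)\mapsto(a,\lambda b,\lambda^{-1}c)$, which conjugation by the swap of $b$ and $c$ visibly inverts. You instead work in the $\G(4,7)$ model, realizing $\iota$ as the projectivization of the grading-reversing map $\sigma(e_j)=e_{-j}$; your computation $\sigma(\omega^L_{\pm1})=-\omega^L_{\mp1}$, $\sigma(\omega^L_0)=-\omega^L_0$ is right, and the key observation that $\tau$ multiplies the $\sigma$-invariant term $e_{-3}\wedge e_3$ is exactly what makes the argument uniform in $\tau$; the nontriviality argument and the $SL(7)$/determinant bookkeeping are also handled correctly. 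Note that both arguments rest on the same external input, namely the identification of the abstract family $V^a$ with the explicit one-parameter family ($VSP(\Gamma_t,6)$, respectively $V_{12}^{L_\tau}$) established earlier in the paper, so neither is more self-contained than the other. What your approach buys: an explicit matrix realization of $\iota$ in $PGL(W_7)$ (the surviving Weyl element of the $SL(2)$-symmetry at the Mukai--Umemura point), directly usable in the Pl\"ucker coordinates $p_{ijkl}$ of the paper's computations of invariant divisors, e.g.\ for the posed problem of computing $lct(V,\Z_2\ltimes S^1)$. What the paper's approach buys: brevity, and the geometric interpretation of $\iota$ as the symmetry swapping the two tangency points of the conics, hence the two special lines and the two families of lines on $V$, which ties into the rest of the VSP analysis.
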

\begin{proof} It is enough to observe that two mutually tangent conics form a
symmetric quartic. Recall that modulo linear change of coordinates such two tangent conics are described by
$$\Gamma_t=\lbrace[a:b:c]\in\PP^2 |\ (ta^2+bc)(a^2+bc)=0\rbrace.$$
 Observe that there is a natural symmetry of $\PP^2$ exchanging the two tangent points given by
\begin{equation}\label{iota}
\iota:\PP^2\ni[a:b:c]\mapsto[a:c:b]\in\PP^2 
\end{equation}
which preserves $\Gamma_t$ and hence induces an automorphism on $VSP(\Gamma_t,6)$. The identity then follows trivially.
\end{proof}
Note that an automorphism of $V^a $ induces an automorphism on the Hilbert scheme of lines.

The existence of $\iota$ has the following consequence:
\begin{thm} The set of all $V$'s in the family $V^a$ which admits K\"ahler-Einstein metrics is open in the Euclidean topology.
\end{thm}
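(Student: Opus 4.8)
The plan is to run an implicit function theorem argument on the Kähler–Einstein equation, using the extra symmetry $\iota$ constructed in the previous proposition to rigidify the problem. The starting point is the observation that the family $V^a$ is a smooth one-parameter family of Fano threefolds (Corollary \ref{smoothV^a}), so we may work with a smooth total space $\mathcal{V}\to B$ over a small disc $B\subset\C$, with each fibre $V_\tau$ equipped with the group $W=\langle S^1,\iota\rangle$ acting by automorphisms (the circle inside $\C^{*}$ together with the involution $\iota$ of \eqref{iota}). By the Bando–Mabuchi theorem \cite{BM}, if a fibre $V_{\tau_0}$ admits a Kähler–Einstein metric, then it admits a $W$-invariant one, so it suffices to prove openness within the space of $W$-invariant metrics.

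First I would fix a $W$-invariant Kähler–Einstein metric $\omega_0$ on a fibre $V_{\tau_0}$ and set up the deformation problem in the space of $W$-invariant Kähler potentials in the class $c_1(V_\tau)$. The Kähler–Einstein condition is an elliptic (complex Monge–Ampère) equation $\mathrm{Ric}(\omega)=\omega$, and the standard obstruction to solving the linearized equation is the kernel of the operator $\Delta_{\omega_0}+1$ on functions, which is precisely the space of real holomorphic vector fields (Futaki/Matsushima theory). Here is where the symmetry enters: for $V_{\tau_0}$ the identity component of the automorphism group is exactly $\C^{*}$ (Theorem \ref{P}, case 2), generated by a single holomorphic vector field $\xi$; but this $\xi$ is \emph{anti}-invariant under $\iota$ by the relation $\iota\circ\theta_\lambda=\theta_\lambda^{-1}\circ\iota$. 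Restricting the whole analysis to the $W$-invariant (hence $\iota$-invariant) setting therefore kills this one-dimensional kernel, so that on the space of $W$-invariant functions the linearized operator is an isomorphism onto its range. This is the argument we learned from \cite{PSSW}.

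With the linearization an isomorphism in the $W$-invariant category, I would then apply the implicit function theorem on suitable Hölder (or Sobolev) spaces of $W$-invariant potentials as $\tau$ varies in $B$: the map sending $(\tau,\varphi)$ to the Ricci defect $\mathrm{Ric}(\omega_\tau+i\partial\bar\partial\varphi)-(\omega_\tau+i\partial\bar\partial\varphi)$ is smooth, vanishes at $(\tau_0,0)$, and has invertible partial derivative in $\varphi$; hence for $\tau$ close to $\tau_0$ there is a $W$-invariant Kähler–Einstein metric on $V_\tau$. This produces an open Euclidean neighbourhood of $\tau_0$ inside the Kähler–Einstein locus, which is exactly the assertion.

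The main obstacle, and the point requiring genuine care rather than routine verification, is establishing the surjectivity/invertibility of the linearized operator in the $W$-invariant category. One must verify rigorously that the only invariant holomorphic vector fields which could obstruct the deformation are spanned by the generator $\xi$ of the $\C^{*}$ action and that this generator is genuinely removed by $\iota$-invariance; equivalently, that no new holomorphic vector fields appear after restricting to $W$-invariant tensors. This hinges on the explicit description of the automorphism group in Theorem \ref{P} together with the anti-commutation relation for $\iota$, and it is here that the existence of $\iota$ for \emph{every} member of $V^a$ (rather than just near the Mukai–Umemura point) is essential. The remaining analytic ingredients — smoothness of the Monge–Ampère operator, the Fredholm theory on the fixed Banach spaces, and the continuity of the background metrics $\omega_\tau$ in $\tau$ — are standard once this kernel computation is in place.
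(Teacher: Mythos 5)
Your proposal is correct and follows essentially the same route as the paper: reduce to $W$-invariant metrics via Bando--Mabuchi, set up the complex Monge--Amp\`ere equation in a smooth family, and apply the implicit function theorem, with the key point being that the one-dimensional kernel of $\Delta_{t_0}+\mathrm{id}$ (the holomorphy potential of the $\C^{*}$-generator) is killed on $W$-invariant functions precisely because $\iota$ anti-commutes with the $\C^{*}$ action. The only cosmetic difference is that the paper outsources the invertibility of the linearized operator on $W$-invariant functions to Theorem 2 of \cite{PSSW} (noting that the hypothesis there on $Stab(\omega_{KE})$ can be dropped), whereas you argue the kernel computation directly; both amount to the same orthogonality statement.
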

\begin{proof}The openness near the Mukai-Umemura threefold follows from \cite{D1} or \cite{RST}. Consider now a K\"ahler-Einstein element $V_{t_0}$ and a smooth family$V_t$  of deformations within $V^a$.  For each such $V_t$ we consider the space $\mathcal C_W^{\infty}$ of smooth functions invariant under the group
 $$W=\Z_2\ltimes S^1.$$
 Note that by Bando-Mabuchi theorem \cite{BM} if a K\"ahler-Einstein metric exists one can also find a $W$-invariant one.

 Choose a smooth family of $W$ invariant forms $\omega_t$
 reperesenting the first Chern classes of $V_t$ and let $f_t$ be the corresponding Ricci potential i.e. 
$$Ric(\omega_t)-\omega_t=i\partial\bar{\partial}f_t,\  \ \int_{V_t}e^{f_t}\omega_t^3=c_1(V_t)^3.$$

As is well known existence of $W$-invariant K\"ahler-Einstein metric is equivalent to the existence of an $W$-invariant function $u_t$ solving the problem
\begin{equation}
*_t\begin{cases}(\omega_t+i\partial\bar{\partial}u_t)^3=e^{f_t-u_t}\omega_t^3,\\
\omega_t+i\partial\bar{\partial}u_t>0,\ u_t\in C_W^{\infty}(V_t).
\end{cases}
\end{equation}
In order to prove the openness it suffices to apply the implicit function theorem. Indeed by assumption the problem $*_{t_0}$ is solvable. The linearized operator at $t_0$ is $\Delta_{t_0}+id$, where $\Delta_{t_0}$ denotes the Laplacian with respect to the $W$-invariant K\"ahler-Einstein metric $\omega_{t_0}+i\partial\bar{\partial}u_{t_0}$. This operator is not invertible as it has a nontrivial one dimensional kernel (corresponding to the holomorphic vector field induced by the $\C^{*}$ action). The operator however is invertible when restricted to $W$-invariant functions (see \cite{PSSW}). Indeed as in Theorem 2  in \cite{PSSW} the group $W$ is a closed subgroup stabilizing $\omega_{t_0}+i\partial\bar{\partial}u_{t_0}$, whose centralizer in $Stab(\omega_{t_0}+i\partial\bar{\partial}u_{t_0})$ is finite by the anticommutativity of $\iota$.  Note that it in \cite{PSSW} it is assumed that $Stab(\omega_{KE})$ is a subgroup of the identity component of the automorphisms (which is not the case in our setting) but 
the proof applies verbatim without this requirement.\footnote{The first author wishes to thank J. Sturm for pointing this out.} Then Theorem 2 in \cite{PSSW} implies that the space of $W$-invariant functions is orthogonal to the kernel of $\Delta_{t_0}+id$ with respect to the $L^2(V_{t_{0}},(\omega_{t_0}+i\partial\bar{\partial}u_{t_0})^3)$. Thus $\Delta_{t_0}+id$ is an invertible operator from $C_W^{2,\alpha}$ to $C_W^{\alpha}$ which yields $C^{2,\alpha}$ smooth solution of $*_t$ for $t$ sufficiently close to $t_0$. Further $C^{\infty}$ regularity  of the solutions $u_t$ is standard.

\end{proof}
\section{The Hilbert scheme of lines on $V_{12}\subset \PP^{13}$}\label{sec4}
The aim of this section is to give an algorithm to find the Hilbert scheme of lines on any Fano 3-fold $V_{12}^{L_{\lambda}}$. We present an implementation of the algorithm in Macaulay 2.
We proceed as follows. Each line  $l\subset V_{12}^{L_{\lambda}}$ is a line in the Grassmannian $\G(4, W_7)$. Hence, there exist 
subspaces $K^l_3\subset T^l_5 \subset W_7$ of dimension 3 and 5 respectively, such that $l=F(K^l_3,4, T^l_5)\subset \G(4, W_7)$, where $F$ denotes the flag variety. Let now 
$\gamma_l\in \bigwedge^3 W_7$ be the simple  3-form associated to $K^l_3$. Consider the map:
$$\varphi_l: L_{\lambda}\ni \omega\mapsto \omega\wedge \gamma_l \in \bigwedge^5 W_7.$$
We claim that the image $\varphi_l$ is a dimension 1 subspace $\bigwedge^5 W_7$ consisting of simple forms. 
Indeed by the description of $l$ we infer that  $\omega\wedge \gamma_l$ is anihilated by $T^l_5$.
Then for each non-zero element $\theta\in \varphi_l( L_{\lambda})$ by reduction with $\gamma_l$, we associate an element
$\tilde{\theta} \in \wedge^2 (W_7/K^l_3)$. The latter satisfies $\tilde{\theta}\wedge \alpha=0$ for each 
$\alpha \in T^l_5/K^l_3$. Since $\dim T^l_5/K^l_3=2$ and $\dim W_7/K^l_3 =4$, the latter implies $\tilde{\theta}$ is a simple form and 
$\tilde{\theta}_1$ and $\tilde{\theta}_2$ are proportional for any two nonzero $\theta_1, \theta_2 \in \varphi_l( L^{\lambda})$.
This proves the claim.
We now have a map $$\Phi: \mathrm{Hilb}_{t+1}(V_{12}^{L_{\lambda}})\ni l \mapsto [\ker \varphi_l] \in \mathbb{P}((L^{\lambda})^{*}).$$

To perform the computation we use the following Macaulay 2 script.
The program below finds the quartic being the Hilbert scheme of lines on $V_{12}^L$
where $L$ is generated by $u,v,w$.
To make the program work we have to fix $u,v,w$ i.e. specify the coefficient $d$.

\begin{lstlisting}
S=QQ[d]
R=S[x_0..x_6, SkewCommutative=>true]
u=x_1*x_6+x_2*x_5+ x_3*x_4
w=-d*x_0*x_6+x_1*x_5+x_2*x_4
v=x_0*x_5+x_1*x_4+x_2*x_3
IL=(gens (intersect(kernel (-u+v),kernel (v-w),  
kernel (u-w))))_{21}
KO=IL*matrix{{u,v,w}}

MAT=((coefficients (KO_0_0))_1)
|((coefficients (KO_1_0))_1)
|((coefficients (KO_2_0))_1)
MING2=mingens (ideal(vars(R))*ideal(vars(R)))
MING5=(coefficients (KO))_0
NORM=substitute ((transpose MING5)*(MING2), 
{x_0=>1, x_1=>1,x_2=>1,x_3=>1,x_4=>1,x_5=>1,x_6=>1})
UL=MING2*NORM*MAT
ID2=(map(S,R))(ideal((coefficients ((UL_0_0)^2))_1)+ 
ideal((coefficients ((UL_1_0)^2))_1)+ 
 ideal((coefficients ((UL_2_0)^2))_1));
ID=saturate ID2
\end{lstlisting}

\section{Concluding remarks}
Fano threefolds $V$ in the family $V^a$ are examples of $T$-varieties (see \cite{S}). K\"{a}hler-Einstein metrics on symmetric $T$-
 varieties were studied in \cite{S} (our examples are not symmetric and have complexity two). In this context it is natural to study the
 rational $\C^{\ast}$ quotient $V\to Y$ where $Y$ is a surface and to describe the possible targets $Y$.
 It would be also very interesting to understand $\C^{\ast}$--invariant test configurations for elements of $V_a$.
 Then the central fiber of such a configuration is a $T$-variety with complexity one, which might be  singular.

Given the importance of the additional symmetry $\iota$ the following problem seems natural:
\begin{prob} For $V\in V^a$ find the log canonical threshold $lct(V,\Z_2\ltimes S^1)$.
\end{prob}

{\bf Acknowledgments.} The first and second named authors were supported by NCN grant 2013/08/A/ST1/00312 and the third by the grant Iuventus plus 0301/IP3/2015/73 "Teoria reprezentacji oraz wlasno\'sci rozmaito\'sci siecznych". We wish to express our gratitude to J. Sturm and Y. Prokhorov for helpful discussions.


\begin{thebibliography}{PSSW}
\bibitem[BM]{BM} Bando, S., Mabuchi, T., Uniqueness of Einstein K\"ahler metrics modulo connected group actions,  Algebraic geometry, Sendai, 1985,  11-40, Adv. Stud. Pure Math., 10, North-Holland, Amsterdam, 1987.
\bibitem[ChS]{Che} Cheltsov I. Shramov K.,  Log-canonical thresholds for
nonsingular Fano threefolds, (Russian) With an appendix by J.-P. Demailly.
Uspekhi Mat. Nauk  63  (2008),  no. 5(383), 73-180;  translation in Russian
Math. Surveys  63  (2008),  no. 5, 859-958.
\bibitem[CDS1]{CDS1} Chen, X, Donaldson S., Sun S., K\"ahler-Einstein metrics on
Fano manifolds. I: Approximation of metrics with cone singularities, J. Amer.
Math. Soc.  28  (2015),  no. 1, 183-197.
\bibitem[CDS2]{CDS2} Chen, X, Donaldson S., Sun S., K\"ahler-Einstein metrics on
Fano manifolds. II: Limits with cone angle less than $2\pi$, J. Amer. Math. Soc.  28
 (2015),  no. 1, 199-234.
\bibitem[CDS3]{CDS3} Chen, X, Donaldson S., Sun S., K\"ahler-Einstein metrics on
Fano manifolds. III: Limits as cone angle approaches $2\pi$ and completion of the
main proof, J. Amer. Math. Soc.  28  (2015),  no. 1, 235-278.
\bibitem[De]{De} Demailly J. P., Appendix to I. Cheltsov and C. Shramov's article "Log canonical thresholds of smooth Fano threefolds" : On Tian's invariant and log canonical thresholds,  Uspekhi Mat. Nauk (2008), no.5 (383), 73-180; translation in Russian Math. Surveys 63 (2008) 859-958. 
\bibitem[DK]{DK}  Dolgachev, I.; Kanev, V., Polar covariants of plane cubics and quartics, Adv. Math. 98 (1993), no. 2, 216-301.

\bibitem[D1]{D1} Donaldson, S.,  K\"ahler geometry on toric manifolds, and some
other manifolds with large symmetry,  Handbook of geometric analysis. No. 1,
29-75, Adv. Lect. Math. (ALM), 7, Int. Press, Somerville, MA, 2008.
\bibitem[D2]{D2} Donaldson, S., A note on the $\alpha$ invariant of the
Mukai-Umemura threefold, preprint arXiv 0711.4357.
\bibitem[D3]{D3} Donaldson, S., Algebraic families of constant scalar curvature K\"ahler metrics, preprint arXiv 1503.05174
\bibitem[F]{F} Furushima, M., Mukai-Umemura's example of the Fano threefold with
genus 12 as a compactification of $\C^3$, Nagoya Math. J. 127 (1992), 145-165.
\bibitem[F1]{F1} Furushima, M., The complete classification of compactifications of $\C^3$
 which are projective 
manifolds with the second Betti number one, Math. Ann.  297  (1993),  no. 4, 627-662.
\bibitem[Is1]{Is1} Iskovskih V., Fano $3$-folds I, Math. U. S. S. R. Izvestiya,
11 (1977), 485-527.
\bibitem[Is2]{Is2} Iskovskih V., Fano $3$-folds II, Math. U. S. S. R. Izvestiya,
12 (1978), 469-506.
\bibitem[Kl]{Kl} Kleppe J., Representing a homogenous polynomial as a sum of
powers of linear forms, PhD thesis, Univ. Oslo (1999).
\bibitem[K]{K} Koll\'ar, J. Singularities of pairs, Algebraic geometry- Santa Cruz (1995),  221-287. 
\bibitem[La]{La} Lazarsfeld, R., Positivity in algebraic geometry. I. Classical setting: line bundles and linear series,  A Series of Modern Surveys in Mathematics 48. Springer-Verlag, Berlin, 2004. 
\bibitem[Ma]{Ma} Matsushima Y. Sur la structure du groupe d'hom\'eomorphismes analytiques d'une certaine vari\'et\'e kaehl\'erienne, 
Nagoya Math. J. 11 (1957), 145-150.
\bibitem[M]{M} Mukai, S., Fano $3$-folds, ``Complex projective geometry''
255-263 London math. Soc. Lecture notes 179 Cambridge UP (1992).
\bibitem[M1]{M1} Mukai, S.,  Plane quartics and Fano threefolds of genus twelve,  The Fano Conference,  563-572, Univ. Torino, Turin, 2004.
\bibitem[MU]{MU} Mukai, S., Umemura H., Minimal rational threefolds, Lecture
Notes in Math., 1016 Springer-Verlag (1983), 490-518.
\bibitem[O]{O} Odaka, Y., On the moduli of K\"ahler Einstein Fano manifolds,
Proceedings of Kinasaki algebraic geometry symposium (2013).
\bibitem[Pa]{Pa} Paris de A., A proof that the maximal rank for plane quartics
is seven, preprint arXiv 1309.6475.
\bibitem[P]{P} Prokhorov, Y., Automorphism groups of Fano 3-folds, (Russian)
Uspekhi Mat. Nauk  45  (1990),  no.
3(273), 195-196;  translation in Russian Math. Surveys  45  (1990),  no. 3,
222-223.
\bibitem[P1]{P1} Prokhorov, Y., Fano threefolds of genus 12 and compactification of $\C^3$, (Russian) Algebra i Analiz  3  (1991),  no. 4, 162-170;  translation in St. Petersburg Math. J.  3  (1992),  no. 4, 855-864.

\bibitem[S]{S} S\"{u}{\ss} H., Kahler-Einstein metrics on symmetric Fano T-varieties,   Adv. Math., 246 (2013), 100-113.
\bibitem[PSSW]{PSSW} Phong, D. H., Song, J., Sturm, J., Weinkove, B., The Moser-Trudinger inequality on K\"ahler-Einstein manifolds, Amer. J. Math.  130  (2008),  no. 4, 1067-1085.
\bibitem[RST]{RST} Rollin, Y. Simanca, S. R., Tipler C., Deformation of extremal metrics, complex manifolds and the relative Futaki invariant, Math. Z. 273 (2013), 547-568.
\bibitem[Ti1]{Ti1} Tian G., On K\"ahler-Einstein metrics on certain K\"ahler
manifolds with $C_1(M)>0$, Invent. Math.  89
 (1987),  no. 2, 225-246.
\bibitem[Ti2]{Ti} Tian, G., Kahler-Einstein metrics with positive scalar
curvature, Invent. Math. 130 (1997), 1-37.
\bibitem[Ti3]{Ti2} Tian, G., Existence of Einstein metrics on Fano manifolds.
Metric and differential geometry,  119-159, Progr. Math., 297,
Birkh\"auser/Springer, Basel, (2012).
\bibitem[Ti4]{Ti4} Tian G., K-stability and K\"ahler-Einstein metrics, Comm. Pure Appl. Math. 68 (2015), 1085-1156.
\end{thebibliography}
\end{document}